\newtheorem{thm}{Theorem}[section]
\newtheorem{cor}[thm]{Corollary}
\newtheorem{prop}[thm]{Proposition}
\newtheorem{lem}[thm]{Lemma}
\theoremstyle{definition}
\newtheorem{defn}[thm]{Definition}
\newtheorem{exmp}[thm]{Example}
\newtheorem{notn}[thm]{Notation}
\theoremstyle{remark}
\newtheorem{rem}[thm]{Remark}
\DeclareMathOperator{\Hom}{Hom}
\DeclareMathOperator{\End}{End}
\DeclareMathOperator{\Aut}{Aut}
\newcommand{\tr}{\triangleright}
\newcommand{\TB}{\mathcal{TB}}
\newcommand{\TT}{\mathcal{T}}
\newcommand{\QQ}{\mathcal{Q}}
\newcommand{\LL}{\mathcal{L}}
\newcommand{\dih}{\mathbb{Z}_n^{dih}}
\newcommand{\dihprime}{\mathbb{Z}_{p^\alpha}^{dih}}
\providecommand{\keywords}[1]
{
  \small	
  \textbf{\textit{Keywords---}} #1
}
\let\c@equation\c@thm
\numberwithin{equation}{section}
\title{Quandle coloring quivers and 2-bridge links}
\author[T. Khandhawit]{Tirasan Khandhawit}
\address{Department of Mathematics, Faculty of Science, Mahidol University\newline Centre of Excellence in Mathematics, CHE, Thailand}
\email{tirasan.kha@mahidol.ac.th}
\author[K. Kruaykitanon]{Korn Kruaykitanon}
\address{Department of Mathematics, Faculty of Science, Mahidol University, Thailand}
\email{korn.kru@student.mahidol.edu}
\author[P. Pongtanapaisan]{Puttipong Pongtanapaisan}
\address{School of Mathematical and Statistical Sciences, Arizona State University}
\email{ppongtan@asu.edu}
\begin{document}

 \begin{abstract}
The quandle coloring quiver was introduced by Cho and Nelson as a categorification of the quandle coloring number. In some cases, it has been shown that the quiver invariant offers more information than other quandle enhancements. In this paper, we compute the quandle coloring quivers of 2-bridge links with respect to the dihedral quandles.
\end{abstract}
\keywords{Quandles, 2-bridge links, quivers}

\maketitle
\section{Introduction}
A \textit{quandle} is an algebraic structure whose axioms are inspired by the Reidemeister moves on link diagrams \cite{joyce1982classifying,matveev1984distributive}. There is a natural quandle $Q(\LL)$ associated to each link $\LL$ called the \textit{fundamental quandle}, which gives rise to an invariant of the link. In fact, $Q(\LL)$ is a complete invariant when the link has one component \cite{joyce1982classifying,matveev1984distributive}. Studying presentations of $Q(\LL)$ can be difficult, and therefore, it is common to extract some information by considering the set of homomorphisms from $Q(\LL)$ to a different quandle $X.$ The cardinality of such a set $|\operatorname{Hom}(Q(\LL),X)|$ is often called the \textit{quandle coloring number}, which has been investigated by many quandle theorists over the years.

Since a set contains more information in addition to its cardinality, the quandle coloring number can be enhanced to give a stronger link invariant. For more details on some examples of useful enhancements such as cocycle and module enhancements, the readers are encouraged to consult \cite{elhamdadi2015quandles}. This paper concerns a particular enhancement introduced by Cho and Nelson called the \textit{quandle coloring quiver} $\mathcal{Q}(\LL)$ \cite{cho2019quandle}. Roughly, elements of $\operatorname{Hom}(Q(\LL),X)$ can be thought of as vertices scattered all over the place, where each vertex represents an assignment of a coloring to $\LL$. The quiver-valued invariant $\mathcal{Q}(\LL)$ gives a way to organize these vertices into a directed graph.

For some particular choices of target quandles $X$ appearing in $\operatorname{Hom}(Q(\LL),X)$, the quandle coloring quivers have been determined for various families of links \cite{basi2021quandle,zhou2023quandle,cazet2023quandles}. It has also been shown that in some cases, the quiver gives more information than cocycle and module enhancements \cite{nelson2019quandle,istanbouli2020quandle}. In this paper, we calculate the quandle quivers for all 2-bridge links with respect to any choice of dihedral quandle. This is particularly interesting when we use the dihedral quandle $\mathbb{Z}_n^{dih}$ of composite order $n$ since the quandle coloring quiver is determined by the coloring number when $n=p_1p_2\cdots p_k$, where $p_i$ is prime \cite{taniguchi2021quandle}. To demonstrate this, we give some more examples where our computations offer more information than the quandle counting invariants in the final section.

\subsection*{Organization}
This paper is organized as follows. In Section \ref{sec:prelims}, we discuss basic definitions from quandle theory and knot theory. In Section \ref{section:coloringnumber}, we calculate the quandle coloring number of 2-bridge links. The coloring number is needed as it is the number of vertices of the quandle coloring quiver invariant. In Section \ref{sec:quiverrationallink}, we prove our main result. Before stating the result in full generality, we discuss the case when $n$ is a power of a prime for ease of reading. We end the paper with more examples where our quiver computations give proper quandle enhancements. 
\section{Preliminaries}
\label{sec:prelims}
In this section, we review some relevant terminologies.
\subsection{Quandles}
\begin{defn} A \textit{quandle} is a nonempty set $X$ equipped with a binary operation $\tr:X\times X\to X$ such that the following properties hold:\begin{itemize}
\item [\textbf{Q1}:] $x\tr x=x$ for all $x\in X$.
\item [\textbf{Q2}:] The map $\beta_y : X\to X$, given by $\beta_y(x)= x\tr y$, is invertible for all $y\in X$.
\item [\textbf{Q3}:] $(x\tr y )\tr z = (x\tr z) \tr (y\tr z)$ for all $x,y,z\in X$.
\end{itemize}\end{defn}
Since $\beta_y$ is invertible, we have a bijection $\beta_y^{-1}:X\to X$. Define $\tr^{-1}: X\times X \to X$ by $x \tr^{-1} y := \beta_y^{-1}(x)$. If $\beta_y=\beta_y^{-1}$ for all $y\in X$, or equivalently $\tr=\tr^{-1}$, then the quandle is said to be \textit{involutory}. In this paper, we primary work with dihedral quandles, which are in fact involutory quandles.

\begin{exmp} For each $n\in\mathbb{N}$, on $\mathbb{Z}_n=\{0,1,2,\dots,n-1\}$, $x\tr y := 2y-x\pmod{n}$ defines the \textit{dihedral quandle} of order $n$. Denote by $\dih$ the dihedral quandle of order $n$. For all $x,y\in \mathbb{Z}_n$, we have $\beta_y\circ\beta_y(x)\equiv \beta_y(2y-x)\equiv 2y-(2y-x)=x\pmod{n}$. From here we see that dihedral quandles are involutory. \end{exmp}
Often, it is useful to study maps between quandles that behave well with the quandle axioms.

\begin{defn} A \textit{quandle homomorphism} from $(X,\tr_X)$ to $(Y,\tr_Y)$ is a map $f:X\to Y$ such that $f(x \tr_X y)=f(x)\tr_Y f(y)$. Denote by $\Hom(X,Y)$ the set of all quandle homomorphisms from $X$ to $Y$. \end{defn}

\begin{defn} Let $X$ be a quandle. A \textit{quandle endomorphism} on $X$ is a quandle homomorphism from $X$ to itself. A \textit{quandle automorphism} on $X$ is a quandle endomorphism on $X$ that is also a bijection. Denote by $\End(X)$ the set of all quandle endomorphisms on $X$ and by $\Aut(X)$ the set of all quandle automorphisms on $X$. \end{defn}

\begin{rem} Under the usual composition, $\End(X)$ has monoid structure, whereas $\Aut(X)$ has group structure.\end{rem}

There is a particularly natural quandle that can be defined from a link diagram.

\begin{defn} Let $\LL$ be an oriented link and $D$ be an oriented diagram of $\LL$ with $n$ strands, $x_1,x_2,\dots,x_n$. The \textit{fundamental quandle} of $D$ is the quandle freely generated by $x_1,x_2,\dots,x_n$ with relations from each crossing as in Figure \ref{fig:fundquandle}. The \textit{fundamental quandle} of an oriented link $\LL$ is defined to be the fundamental quandle of an oriented diagram of $\LL$.\end{defn}

\begin{figure}[ht]
    \centering
    \includegraphics[width=0.3\textwidth]{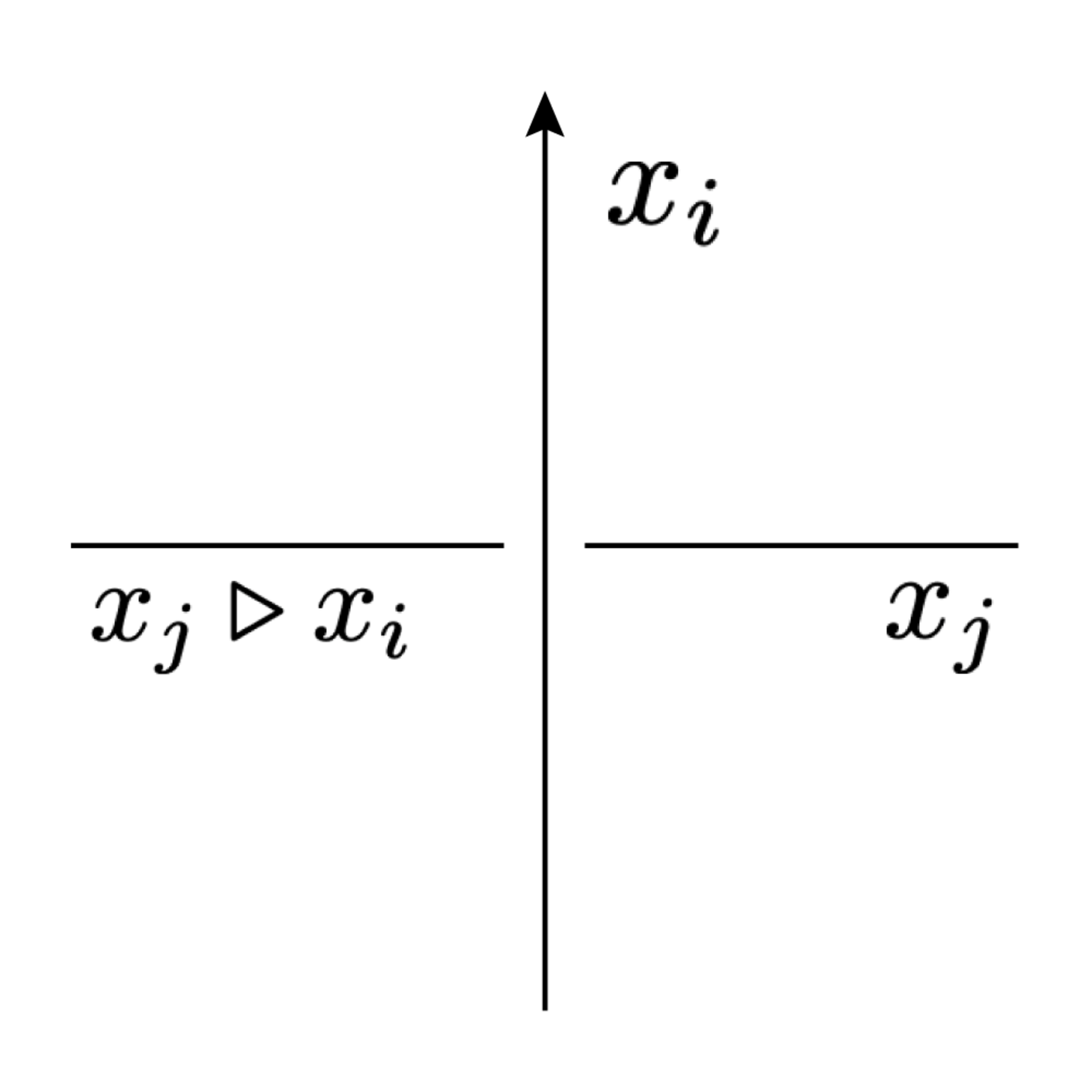}
    \caption{\label{fig:fundquandle} The relation for the fundamental quandle at each crossing}
\end{figure}

A basic way to study homomorphisms between quandles is to count how many there are.
\begin{defn} Let $\LL$ be a link and $X$ be a finite quandle. An \textit{$X$-quandle coloring} of $\LL$ is a quandle homomorphism $Q(\LL)\to X$. The \textit{$X$-quandle counting invariant} of the link $\LL$ is the number of $X$-quandle colorings of $\LL$, i.e. the size of the set $\Hom(Q(\LL),X)$. This cardinality is also called the \textit{quandle coloring number}. \end{defn}

\begin{exmp} For any link $\LL$ and a quandle $X$. Fix $x\in X$. Then, $\psi_x : Q(\LL)\to X$, given by $\psi_x(x_i):=x$, defines a quandle homomorphism since at each crossing we have $\psi_x(x_i\tr x_j)=x=x\tr x= \psi_x(x_i)\tr \psi_x(x_j)$. Such quandle colorings are called the \textit{trivial quandle coloring}. In general, we have $\{\psi_x:x\in X\}\subseteq\Hom(Q(\LL),X)$, so $|\Hom(Q(\LL),X)|\geq |X|$.\end{exmp}

Since the set of homomorphisms contains more information than its cardinality, various quandle enhancements have been defined. The following concept is particularly relevant to this paper.

\begin{defn}[See \cite{cho2019quandle}] Let $X$ be a finite quandle. Fix $S\subseteq \Hom(X,X)$. The \textit{$X$-quandle coloring quiver} $\QQ_X^S(\LL)$ of a link $\LL$ with respect to $S$ is the direct graph with vertex set $\Hom(Q(\LL),X)$ and directed edges $\psi_1 \overset{f}{\to} \psi_2$ whenever $\psi_2=f\circ \psi_1$ and $f\in S$. When $S=\Hom(X,X)$, we denote the corresponding quiver by simply $\QQ_X(\LL)$ and call it the \textit{full quandle coloring quiver}.\end{defn}

\begin{notn} Denote by $(\overleftrightarrow{K_n},\widehat{m})$ the directed graph with $n$ vertices where every vertex has $m$ directed edges from itself to each vertex. For each graph $G$ and $H$, define $G \overleftarrow{\nabla}_{\widehat{m}}H$ to be the disjoint union graph $G\sqcup H$ with additional $m$ directed edges from every vertex of $H$ to each vertex of $G$. \end{notn}

\subsection{Rational Tangles and Links}
An \textit{$n$-string tangle} is a collection of $n$ properly embedded disjoint arcs in the 3-ball. In this paper, we will work exclusively with 2-string tangles. Thus, we will simply refer to 2-string tangles as \textit{tangles} for brevity. A tangle can also be defined diagrammatically.

\begin{defn} 
A \textit{tangle diagram} is a portion of a link diagram surrounded by a circle intersecting the link diagram in four points labelled $NE,NW,SE,SW$. Two tangle diagrams are \textit{equivalent} if and only if one can be obtained from another by Reidemeister moves in finitely many steps inside the surrounding circle while the four points remain fixed.
\end{defn}
We will now give a definition of rational tangles. We note that there are other ways to define the equivalent object in the literature.

Let [0] denote the horizontal tangle shown in Figure \ref{fig:tangle1} (left). For an integer $p\neq 0$, let $[p]$ denote the tangle obtained from twisting the $NE$ and $SE$ endpoints $p$ times, where the sign is positive (resp. negative) if the overstrand has positive (resp. negative) slope (see Figure \ref{fig:tangle1}).
\begin{figure}[ht]
    \centering
    \includegraphics[width=0.7\textwidth]{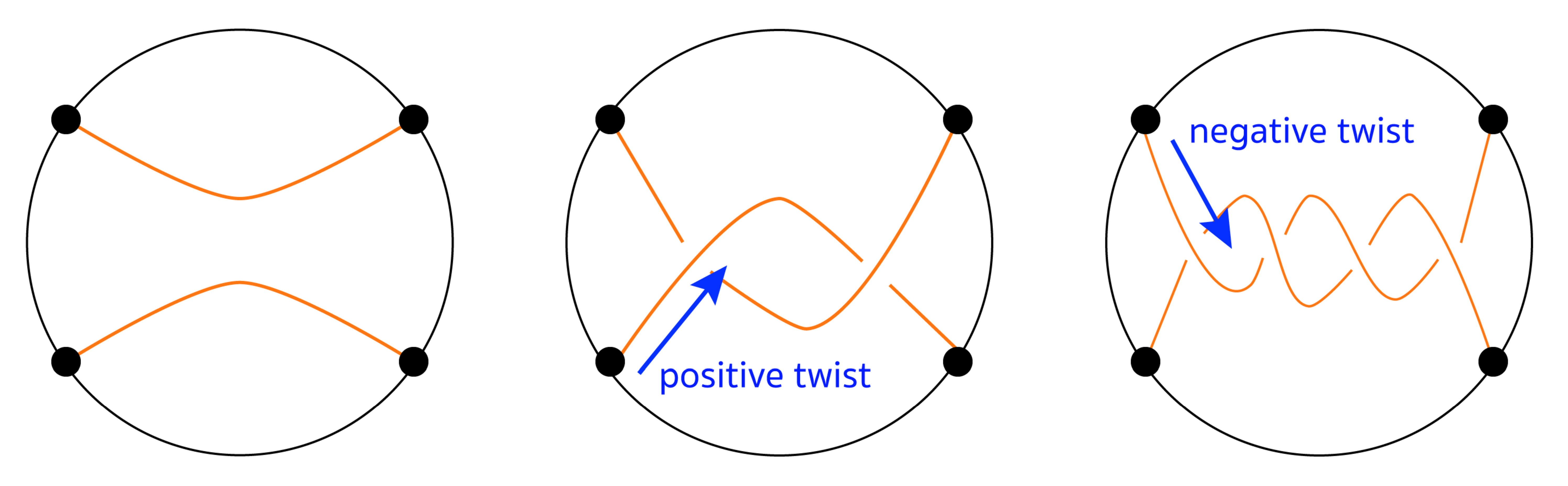}
    \caption{Left: The [0] tangle. Middle: The [+2] tangle. Right: The [-4] tangle.}
    \label{fig:tangle1}
\end{figure}
\begin{figure}[ht]
    \centering
    \includegraphics[width=1\textwidth]{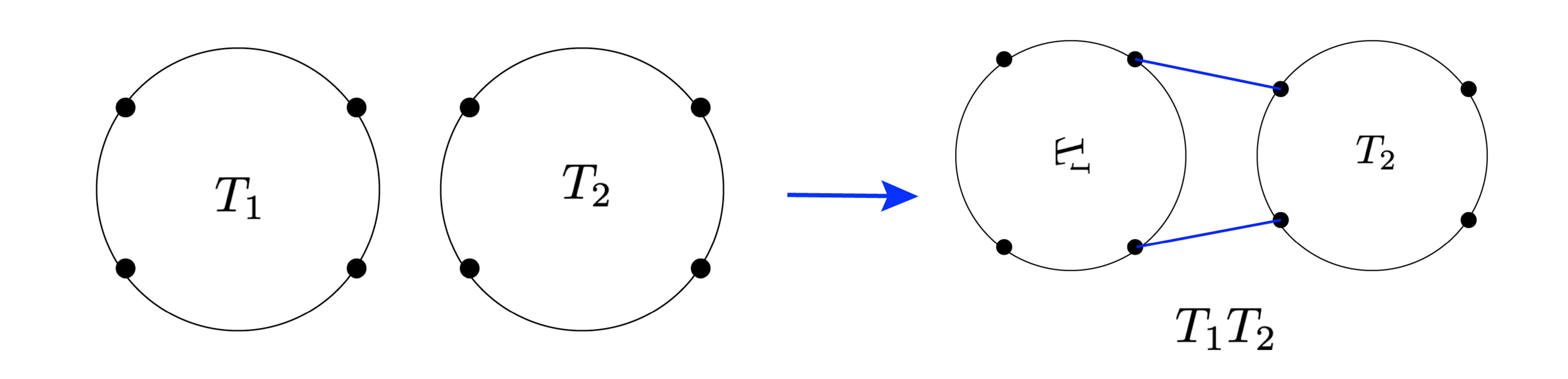}
    \caption{construction of $T_1T_2$ from $T_1,T_2$.}
    \label{fig:tangle2}
\end{figure}

Given two tangles $T_1$ and $T_2$, we can connect the two tangles into a new one. Let us denote by $T_1T_2$ the tangle obtained from reflecting $T_1$ along $NW-SE$ line and connecting it to $T_2$ from the left. (see Figure \ref{fig:tangle2}) Note that in general, $T_1T_2\neq T_2T_1$.

Let $N\geq 1$, and $p_1,p_2,\dots,p_N$ be integers. Let $[p_1p_2\dots p_N]$ be the tangle $T_N$, where $T_1=[p_1]$ and $T_j=T_{j-1}[p_j]$ for $1\leq j\leq N$. This kind of tangle is called a \textit{rational tangle}.

To each rational tangle $[p_1p_2\dots p_N]$, there is an associated rational number $$\overline{p_1p_2\dots p_N} := p_N+\frac{1}{\dots + \frac{1}{p_2+\frac{1}{p_1}}}$$ that is a complete tangle invariant. That is, Conway showed that two rational tangles are equivalent if and only if their rational numbers are equal \cite{conway1970enumeration}.

The numerator closure of a rational tangle yields a \textit{rational link}. It can be shown that rational links are precisely two-bridge links. Let us denote by $\TB(p_1p_2\dots p_N)$, or $\TB(\overline{p_1p_2\dots p_N})$ the closure of the rational tangle $[p_1p_2\dots p_N]$ (see Figure \ref{fig:TBeven}).

We note that any rational tangle can be put in a canonical form so that each $p_i$ in $\TB(p_1p_2\dots p_N)$ has the same sign \cite{kauffman2004classification}. Since $\TB((-p_1)(-p_2)\dots (-p_N))$ is the mirror image of $\TB(p_1p_2\dots p_N)$, their involutorized fundamental quandles are isomorphic. Hence, their quandle enhancements, e.g. coloring number, quiver, are isomorphic. From now on, we shall assume that $p_1,p_2,\dots,p_N>0$.

\begin{figure}[ht]
    \centering
    \includegraphics[width=1\textwidth]{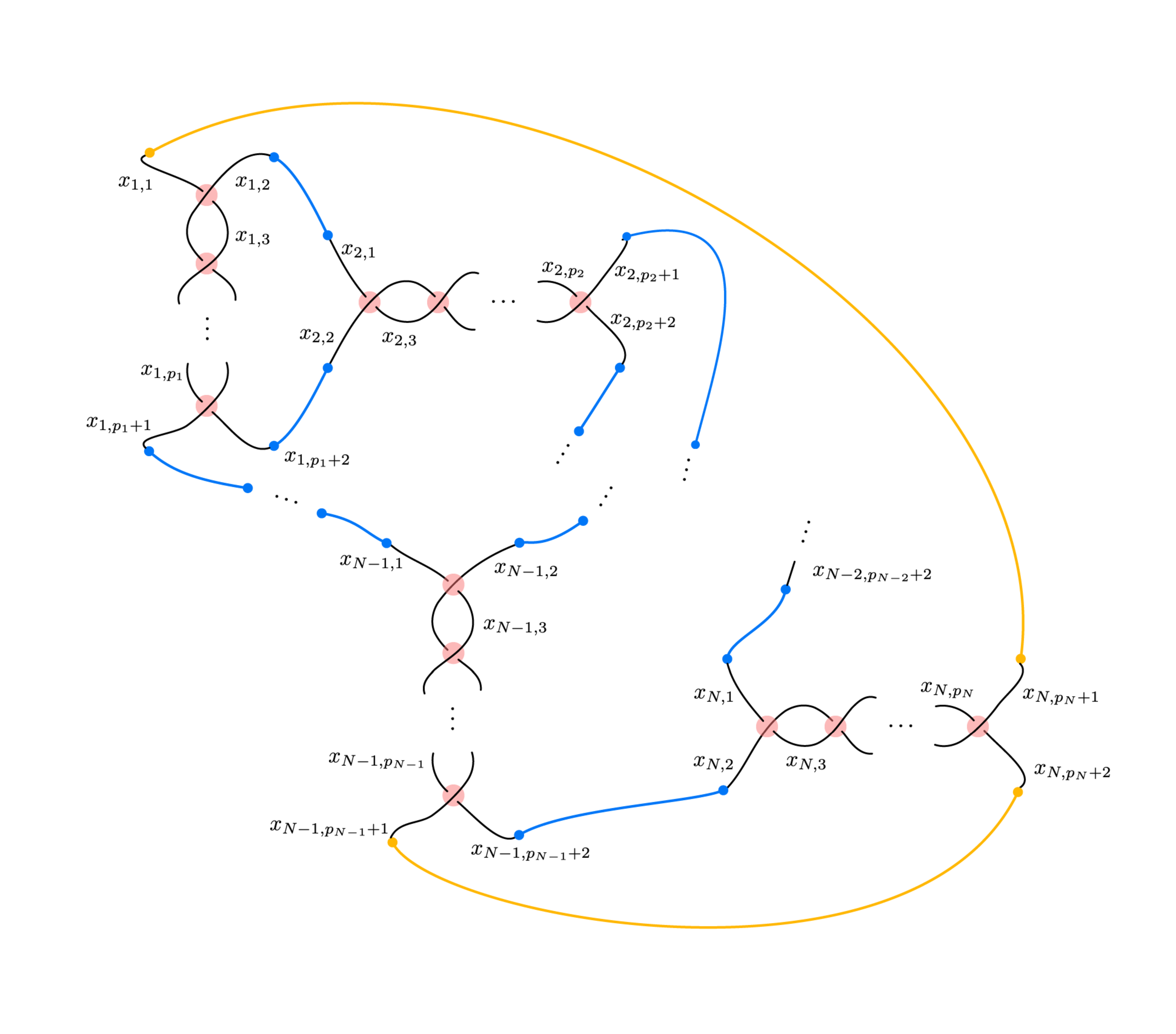}
    \caption{Labels for the strands of a 2-bridge link.}
    \label{fig:TBeven}
\end{figure}

\section{The quandle coloring numbers}\label{section:coloringnumber}
The main goal of this section is to determine the number of colorings of 2-bridge links by dihedral quandles. We begin by discussing a presentation for the fundamental quandle of 2-bridge links:

 \begin{align*}Q(\TB(p_1p_2\dots p_N))&=\langle x_{j,i} \text{ for } 1\leq j\leq N \text{ and } 1\leq i\leq p_j+2\mid \\ 
& x_{j,i} = x_{j,i-2} \tr^\pm x_{j,i-1} \text{ for } 1\leq j\leq N \text{ and } 3\leq i\leq p_j+2,\\
& x_{2,1}=x_{1,2}, x_{2,2}=x_{1,p_1+2},\\
& x_{j,1}=x_{j-2,p_{j-2}+1}, x_{j,2}=x_{j-1,p_{j-1}+2} \text{ for }3\leq j\leq N,\\
& x_{N,p_N+1}=x_{1,1} , x_{N,p_N+2}=x_{N-1,p_{N-1}+1}\rangle,\end{align*}

Since dihedral quandles are involutory, i.e. $\tr=\tr^{-1}$, for any quandle homomorphism $\psi:Q(\TB(p_1p_2\dots p_N))\to \dih$ we have the following relations \begin{align*}
& \psi(x_{j,i} )= \psi(x_{j,i-2}) \tr \psi(x_{j,i-1}) \text{ for } 1\leq j\leq N \text{ and } 3\leq i\leq p_j+2,\\
& \psi(x_{2,1})=\psi(x_{1,2}), \psi(x_{2,2})=\psi(x_{1,p_1+2}),\\
& \psi(x_{j,1})=\psi(x_{j-2,p_{j-2}+1}), \psi(x_{j,2})=\psi(x_{j-1,p_{j-1}+2}) \text{ for }3\leq j\leq N,\\
& \psi(x_{N,p_N+1})=\psi(x_{1,1} ), \psi(x_{N,p_N+2})=\psi(x_{N-1,p_{N-1}+1}).
\end{align*} Moreover, any map $\psi : \{x_{j,i}\mid  1\leq j\leq N \text{ and } 1\leq i\leq p_j+2\}  \to \dih$ satisfying the relations extends to a unique quandle homomorphism $\tilde{\psi}: Q(\TB(p_1p_2\dots p_N))\to \dih$.

Next, we prove an important proposition relating the colorings of two generating strands. This generalizes Proposition 2.4 of \cite{basi2021quandle}.

For a rational tangle $[p_1p_2\dots p_N]$, let $\Delta_j$ be the numerator of the rational number $\overline{p_1p_2\dots p_j}$ and also denote by $\Delta := \Delta_N$. Note that $\Delta_j$ satisfies recurrence relation
\begin{align*}
   \Delta_0 := 1, \, \Delta_1 = p_1, \, \Delta_j = p_j \Delta_{j-1} + \Delta_{j-2} .
\end{align*}
In fact, the number $\Delta$ is the determinant of $\TB(p_1p_2\dots p_N)$ (see \cite{kauffman2009rational}).

\begin{prop}\label{preunivtb} For $\psi \in \Hom(Q(\TB\left(p_1p_2\dots p_N\right)),\dih)$, we have $$\Delta\psi(x_{1,1})\equiv \Delta\psi(x_{1,2}) \pmod{n}.$$ \end{prop}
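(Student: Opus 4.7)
The plan is to reduce the statement to a linear problem over $\mathbb{Z}/n$. Set $a := \psi(x_{1,1})$ and $c := \psi(x_{1,2}) - a$. Since the dihedral operation $x \tr y = 2y - x$ is affine and preserves expressions of the form $a + (\text{integer})\cdot c$, an induction through the generator recurrence $x_{j,i} = x_{j,i-2} \tr x_{j,i-1}$ inside each twist region (propagating across regions via the boundary identifications) produces integers $\gamma_{j,i}$, well-defined modulo $n$, such that
$$\psi(x_{j,i}) = a + \gamma_{j,i}\, c, \qquad \gamma_{1,1} = 0, \qquad \gamma_{1,2} = 1,$$
and $\gamma_{j,i} = 2\gamma_{j,i-1} - \gamma_{j,i-2}$ within each twist region. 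The target identity $\Delta\psi(x_{1,1}) \equiv \Delta\psi(x_{1,2}) \pmod n$ is equivalent to $\Delta c \equiv 0 \pmod n$, so it suffices to identify a well-chosen $\gamma_{j,i}$ with $\Delta$ and exploit a closure relation.

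The intra-twist recurrence has characteristic polynomial $(t-1)^2$, so $\gamma_{j,i} = (2-i)\, r_j + (i-1)\, s_j$ with $r_j := \gamma_{j,1}$ and $s_j := \gamma_{j,2}$; evaluating at $i = p_j+1$ and $i = p_j+2$ gives
$$\gamma_{j, p_j+1} = r_j + p_j (s_j - r_j), \qquad \gamma_{j, p_j+2} = s_j + p_j (s_j - r_j).$$
Adopting the convention $\Delta_{-1} := 0$, I would then prove by induction on $j$ the four identities
$$r_j = \Delta_{j-2}, \quad s_j = \Delta_{j-1} + \Delta_{j-2}, \quad \gamma_{j, p_j+1} = \Delta_j, \quad \gamma_{j, p_j+2} = \Delta_j + \Delta_{j-1}.$$
The cases $j = 1$ and $j = 2$ follow directly from the presentation (for $j=2$, using the boundary relations $x_{2,1} = x_{1,2}$ and $x_{2,2} = x_{1, p_1+2}$). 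For the inductive step $j \geq 3$, the boundary relations $x_{j,1} = x_{j-2, p_{j-2}+1}$ and $x_{j,2} = x_{j-1, p_{j-1}+2}$ translate the inductive hypothesis immediately into $r_j = \Delta_{j-2}$ and $s_j = \Delta_{j-1} + \Delta_{j-2}$, after which the defining recurrence $\Delta_j = p_j\Delta_{j-1} + \Delta_{j-2}$ converts the intra-twist formulas above into the claimed $\Delta_j$ and $\Delta_j + \Delta_{j-1}$.

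The proof concludes by invoking the closure relation $x_{N, p_N+1} = x_{1,1}$, which reads $a + \gamma_{N, p_N+1}\, c = a$ in our coordinates. Since $\gamma_{N, p_N+1} = \Delta_N = \Delta$, this is precisely $\Delta c \equiv 0 \pmod n$, which is the claim of the proposition. The case $N = 1$ fits the same template, and the second closure relation $x_{N, p_N+2} = x_{N-1, p_{N-1}+1}$ yields the same congruence and is therefore redundant for our purposes. The main obstacle I anticipate is not conceptual but organizational: aligning the two different index shifts in the boundary relations (two twist regions back for $x_{j,1}$, one back for $x_{j,2}$) with the two-term recurrence defining $\Delta_j$, and handling the small base cases $j = 1, 2$ carefully so that the $\Delta_{-1} = 0$ convention makes the induction proceed uniformly.
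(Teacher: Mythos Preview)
Your proof is correct and follows essentially the same route as the paper. The paper's bracket notation $[a] := a\,\psi(x_{1,2}) - (a-1)\,\psi(x_{1,1})$ is exactly your $a + \gamma c$ after the substitution $c = \psi(x_{1,2}) - \psi(x_{1,1})$, and the inductive claim $\psi(x_{j,p_j+1}) = [\Delta_j]$, $\psi(x_{j,p_j+2}) = [\Delta_j + \Delta_{j-1}]$ is identical to your $\gamma_{j,p_j+1} = \Delta_j$, $\gamma_{j,p_j+2} = \Delta_j + \Delta_{j-1}$; both arguments then finish via the closure relation $x_{N,p_N+1} = x_{1,1}$.
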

\begin{proof}
Since the term of the form $a\psi(x_{1,2})-(a-1)\psi(x_{1,1})$ appears frequently in the proofs, we let $[a]:=a\psi(x_{1,2})-(a-1)\psi(x_{1,1})$. We observe that for all $1\leq j\leq N$, we have $\psi(x_{j,p_j+1})=p_j\psi(x_{j,2})-(p_j-1)\psi(x_{j,1})$, and $\psi(x_{j,p_j+2})=(p_j+1)\psi(x_{j,2})-p_j\psi(x_{j,1})$.

\textbf{Claim:} For all $1\leq j\leq N$, we have $\psi(x_{j,p_j+1})=[\Delta_j]$ and $\psi(x_{j,p_j+2})=[\Delta_j+\Delta_{j-1}]$. 

\begin{proof}[Proof of Claim]
    We prove the claim by induction. For base case $j=1$, we note that $\psi(x_{1,p_1+1})=[p_1]=[\Delta_1]$, and $\psi(x_{1,p_1+2})=[p_1+1]=[\Delta_1+\Delta_0]$. Let $1\leq j\leq N$. Suppose that the claim hold true for all positive integer less than $j$.  \begin{description}
\item [Case 1] $j=2$. We have $\psi(x_{2,1})=\psi(x_{1,2})=[1]$ and $\psi(x_{2,2})=\psi(x_{1,p_1+2})=[p_1+1]$. This gives $\psi(x_{2,p_j+1})=p_2 [p_1+1]-(p_2-1)[1]=[p_2p_1+1]=[\Delta_2]$, and $\psi(x_{2,p_j+2})=(p_2+1)[p_1+1]-p_2[1]=[p_2p_1+p_1+1]=[\Delta_2+\Delta_1]$.
\item [Case 2] $j\geq 3$. As $j-1,j-2\geq 1$, we apply inductive hypothesis and obtain
\begin{align*}
\psi(x_{j,1}) &= \psi(x_{j-2,p_{j-2}+1}) = [\Delta_{j-2}],\\
\psi(x_{j,2}) &= \psi(x_{j-1,p_{j-1}+2}) = [\Delta_{j-1}+\Delta_{j-2}],\\ 
\psi(x_{j,p_j+1}) &= p_j[\Delta_{j-1}+\Delta_{j-2}] - (p_j-1) [\Delta_{j-2}] = [p_j\Delta_{j-1}+\Delta_{j-2}]=[\Delta_j],\\
\psi(x_{j,p_j+2}) &= (p_j+1)[\Delta_{j-1}+\Delta_{j-2}] - p_j [\Delta_{j-2}] \\
&= [p_j\Delta_{j-1}+\Delta_{j-2}+\Delta_{j-1}] = [\Delta_j+\Delta_{j-1}].
\end{align*} \end{description} 
Thus, the claim is verified.

\end{proof}
With the claim proved, we have $\psi(x_{N,p_N+1})=[\Delta_N]$ and $\psi(x_{N,p_N+2})=[\Delta_N + \Delta_{N-1}]$. The relations from the closure of the tangle give a single equation 
\begin{center}
    $\Delta_N \psi(x_{1,1}) \equiv \Delta_N \psi(x_{1,2}) \pmod{n}$.
\end{center}

Hence, the assertion is proved. 
\end{proof}

\begin{prop} \label{univtb} Any map $\psi: \{x_{1,1},x_{1,2}\} \to \dih$ such that $\Delta \psi(x_{1,1})\equiv \Delta \psi(x_{1,2})\pmod{n}$ extends to a unique quandle homomorphism $\tilde\psi : Q(\TB\left(p_1p_2\dots p_N\right)) \to \dih $, i.e. the diagram $$\begin{tikzcd} \{x_{1,1},x_{1,2}\} \arrow[r,"\psi"] \arrow[d,hook,"i"]& \dih \\ Q(\TB\left(p_1p_2\dots p_N\right))\arrow[ru,dashed,"\tilde{\psi}"swap] \end{tikzcd}$$ commutes. \end{prop}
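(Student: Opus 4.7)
The plan is to build $\tilde\psi$ on all generators $x_{j,i}$ by explicit recursion and then verify that every defining relation of the presentation holds. Because $\dih$ is involutory, each twist relation $x_{j,i}=x_{j,i-2}\tr^\pm x_{j,i-1}$ becomes $\tilde\psi(x_{j,i})=\tilde\psi(x_{j,i-2})\tr\tilde\psi(x_{j,i-1})$, so I can simply \emph{use} these as the definitions, sweeping from left to right inside each tangle box.

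Concretely, I would proceed in three stages. First, on strand $j=1$, set $\tilde\psi(x_{1,1})=\psi(x_{1,1})$, $\tilde\psi(x_{1,2})=\psi(x_{1,2})$, and define $\tilde\psi(x_{1,i})$ for $3\le i\le p_1+2$ by the twist recursion. Second, for $j\ge 2$, use the identifications from the presentation as the definition: $\tilde\psi(x_{2,1}):=\tilde\psi(x_{1,2})$, $\tilde\psi(x_{2,2}):=\tilde\psi(x_{1,p_1+2})$, and for $j\ge 3$, $\tilde\psi(x_{j,1}):=\tilde\psi(x_{j-2,p_{j-2}+1})$, $\tilde\psi(x_{j,2}):=\tilde\psi(x_{j-1,p_{j-1}+2})$; then fill in $\tilde\psi(x_{j,i})$ for $3\le i\le p_j+2$ via the twist recursion. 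This construction automatically satisfies the twist relations (by definition) and the strand-identification relations (likewise by definition), leaving only the two closure relations
\[
x_{N,p_N+1}=x_{1,1}, \qquad x_{N,p_N+2}=x_{N-1,p_{N-1}+1}
\]
to check.

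For these, I invoke the claim from the proof of Proposition~\ref{preunivtb}, which, applied to $\tilde\psi$ rather than an abstract $\psi$, gives $\tilde\psi(x_{N,p_N+1})=[\Delta]$ and $\tilde\psi(x_{N,p_N+2})=[\Delta+\Delta_{N-1}]$, where $[a]=a\psi(x_{1,2})-(a-1)\psi(x_{1,1})$. The first closure relation then asks that $[\Delta]\equiv[0]\pmod n$, and the second that $[\Delta+\Delta_{N-1}]\equiv[\Delta_{N-1}]\pmod n$; both expand to exactly the single congruence $\Delta\psi(x_{1,1})\equiv\Delta\psi(x_{1,2})\pmod n$, which is the standing hypothesis. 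Thus every relation is satisfied and $\tilde\psi$ descends to a well-defined quandle homomorphism.

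Uniqueness is immediate: any extension must agree with $\psi$ on $x_{1,1},x_{1,2}$, and the chain of twist and identification relations listed above forces the value on every other generator, so the recursion I used is the only possible choice. The one place where real work happens is the reduction of the two closure relations to a single congruence, and this is handled entirely by reusing the bookkeeping already established in Proposition~\ref{preunivtb}; I do not expect a genuine obstacle here, only careful notation.
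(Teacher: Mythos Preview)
Your proposal is correct and follows essentially the same approach as the paper: define $\tilde\psi$ on all generators by the twist and identification recursions, then use the claim from Proposition~\ref{preunivtb} to see that the two closure relations both reduce to the hypothesis $\Delta\psi(x_{1,1})\equiv\Delta\psi(x_{1,2})\pmod n$. Your write-up is slightly more explicit about the reduction $[\Delta]\equiv[0]$ and $[\Delta+\Delta_{N-1}]\equiv[\Delta_{N-1}]$ and about uniqueness, but there is no substantive difference from the paper's argument.
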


\begin{proof} We extend $\psi$ to $\bar\psi: \{x_{j,i}\mid  1\leq j\leq N \text{ and } 1\leq i\leq p_j+2\}  \to \dih$ uniquely to other generators recursively using the following relations  \begin{align*}\bar\psi(x_{j,i} ) &= \bar\psi(x_{j,i-2}) \tr \bar\psi(x_{j,i-1}) \text{ for } 1\leq j\leq N \text{ and } 3\leq i\leq p_j+2,\\
\bar\psi(x_{2,1})&=\bar\psi(x_{1,2}), \bar\psi(x_{2,2})=\bar\psi(x_{1,p_1+2}),\\
 \bar\psi(x_{j,1})&=\bar\psi(x_{j-2,p_{j-2}+1}), \bar\psi(x_{j,2})=\bar\psi(x_{j-1,p_{j-1}+2}) \text{ for }3\leq j\leq N. \end{align*} From the proof of Proposition \ref{preunivtb}, we see that $\bar\psi(x_{N,p_N+1})=[\Delta_N]$ and $\bar\psi(x_{N,p_N+2})=[\Delta_N + \Delta_{N-1}]$. Hence, the relations  $$\bar\psi(x_{N,p_N+1})=\bar\psi(x_{1,1} ), \bar\psi(x_{N,p_N+2})=\bar\psi(x_{N-1,p_{N-1}+1})$$ hold and $\bar\psi$ extends to a unique quandle homomorphism $\tilde\psi : Q(\TB(p_1p_2\dots p_N))\to \dih$.  \end{proof}

\begin{cor} \label{corcolornumber} The quandle coloring number of a 2-bridge link is given by the formula $|\Hom(Q(\TB\left(p_1p_2\dots p_N\right)),\dih)|=n\gcd(\Delta,n),$ where $n$ of which are trivial quandle colorings.\end{cor}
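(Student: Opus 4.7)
The plan is to apply Proposition \ref{univtb} to convert the counting problem into a purely arithmetic one. By that proposition, the map
\[
\Hom(Q(\TB(p_1p_2\dots p_N)),\dih)\;\longrightarrow\;\{(a,b)\in\Z_n\times\Z_n : \Delta a\equiv\Delta b\pmod n\},\qquad \tilde\psi\mapsto(\tilde\psi(x_{1,1}),\tilde\psi(x_{1,2})),
\]
is a bijection, so it suffices to count pairs $(a,b)\in\Z_n^2$ with $\Delta(a-b)\equiv 0\pmod n$.

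For the counting step, I would fix the difference $c=a-b\in\Z_n$ and note that the equation $\Delta c\equiv 0\pmod n$ has exactly $\gcd(\Delta,n)$ solutions $c\in\Z_n$ (a standard fact about linear congruences). For each admissible $c$, there are exactly $n$ pairs $(a,b)\in\Z_n^2$ with $a-b\equiv c\pmod n$, namely one for each choice of $b$. Multiplying gives
\[
|\Hom(Q(\TB(p_1p_2\dots p_N)),\dih)|=n\gcd(\Delta,n).
\]

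For the count of trivial colorings, recall that a trivial coloring $\psi_x$ sends every generator to some fixed $x\in\dih$; in particular $\psi_x(x_{1,1})=\psi_x(x_{1,2})=x$, and distinct $x$ yield distinct homomorphisms. Conversely, if $\tilde\psi(x_{1,1})=\tilde\psi(x_{1,2})=x$, then the recursive extension in the proof of Proposition \ref{univtb} forces every $\bar\psi(x_{j,i})=x$, since $x\tr x=x$ in $\dih$. Hence the trivial colorings are in bijection with $\dih$, giving exactly $n$ of them.

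No step here is a serious obstacle: the work was already done in Propositions \ref{preunivtb} and \ref{univtb}, which reduce the problem to counting $\Z_n^2$-pairs under a single linear congruence. The only point to be careful about is invoking the right parameterization (by the pair $(\psi(x_{1,1}),\psi(x_{1,2}))$ rather than by the generator set) so that the extension/uniqueness statement of Proposition \ref{univtb} applies cleanly.
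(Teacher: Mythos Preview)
Your proof is correct and follows essentially the same approach as the paper: both use Propositions \ref{preunivtb} and \ref{univtb} to identify $\Hom(Q(\TB(p_1p_2\dots p_N)),\dih)$ with the set of pairs $(a,b)\in\Z_n^2$ satisfying $\Delta a\equiv\Delta b\pmod n$, then count these pairs and observe that the diagonal $a=b$ gives the $n$ trivial colorings. Your write-up is simply a bit more explicit about the counting step (via the substitution $c=a-b$) and about why a coloring with $\tilde\psi(x_{1,1})=\tilde\psi(x_{1,2})$ is forced to be trivial.
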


\begin{proof} By Proposition \ref{preunivtb} and \ref{univtb}, $|\Hom(Q(\TB\left(p_1p_2\dots p_N\right)),\dih)|$ is equal to the number of choices of $(\psi(x_{1,1}),\psi(x_{1,2}))\in\dih\times \dih$ such that $\Delta\psi(x_{1,1})\equiv \Delta\psi(x_{1,2}) \pmod{n}$ which is exactly $n\gcd(\Delta,n)$. Among all the colorings, there are $n$ trivial colorings corresponding to choices $\psi(x_{1,1})=\psi(x_{1,2})\in\dih$.\end{proof}

\section{The quandle coloring quiver of 2-bridge links}\label{sec:quiverrationallink}
It will turn out that the quiver invariant can be organized based on how the automorphism group $\Aut(\dih)$ acts on the colorings.

Throughout this section, we consider a 2-bridge link $\TB\left(\frac{N}{M}\right)$, where $N$ and $M$ are positive and relatively prime.
\begin{notn}
By Proposition \ref{univtb}, we denote by $[a,b]$ the unique quandle homomorphism $\psi \in \Hom(Q\left(\TB\left(\frac{N}{M}\right)\right), \dih)$ such that $\psi(x_{1,1})=a$ and $\psi(x_{1,2})=b$.
\end{notn}
 Note that such $a$ and $b$ satisfy $n \mid N(b-a)$.
\begin{notn}
    Analogously, for $x,y \in \mathbb{Z}_n$, there is a unique endomorphism $f \in \End(\dih)$ such that $f(0) = x$ and $f(1) =y$. We denote such an endomorphism by $\llbracket x,y\rrbracket$
\end{notn}
 Observe that $f(a) = ay - (a-1)x = (y-x)a + x \pmod{n}$. Moreover, $\llbracket x,y\rrbracket$ is an automorphism precisely when $y-x\in \mathbb{Z}_n^\times$.

\begin{prop}\label{autactonend}  $\Aut(\dih)$ acts on $\Hom(Q\left(\TB\left(\frac{N}{M}\right)\right), \dih)$ by post-composition, i.e. $f\tr [a,b] :=f\circ [a,b]=[f(a),f(b)]$. 
\end{prop}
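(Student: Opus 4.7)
The plan is to prove three things in sequence: first, that $f \circ [a,b]$ is a well-defined quandle homomorphism from $Q(\TB(\frac{N}{M}))$ to $\dih$; second, that this composition equals $[f(a), f(b)]$ in the notation introduced; and third, that this defines a genuine group action. The identification in the second step is the real content; the rest is mostly formal.

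First I would note that the composition of two quandle homomorphisms is again a quandle homomorphism, so $f \circ [a,b]$ lies in $\Hom(Q(\TB(\frac{N}{M})), \dih)$ automatically. To match this against the notation $[f(a), f(b)]$, I need to verify that the pair $(f(a), f(b))$ satisfies the hypothesis of Proposition \ref{univtb}, namely $n \mid N(f(b) - f(a))$. Writing $f = \llbracket x, y \rrbracket$, one has the formula $f(c) = (y-x)c + x \pmod{n}$ from the preceding discussion, so
\begin{equation*}
N(f(b) - f(a)) = N(y-x)(b-a) = (y-x) \cdot N(b-a) \equiv 0 \pmod{n},
\end{equation*}
since $[a,b] \in \Hom(Q(\TB(\frac{N}{M})), \dih)$ already satisfies $n \mid N(b-a)$. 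Hence $[f(a), f(b)]$ is a valid label.

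Next I would invoke the uniqueness clause of Proposition \ref{univtb}: any quandle homomorphism $Q(\TB(\frac{N}{M})) \to \dih$ is determined by its values on $x_{1,1}$ and $x_{1,2}$. Evaluating the composition gives
\begin{equation*}
(f \circ [a,b])(x_{1,1}) = f(a), \qquad (f \circ [a,b])(x_{1,2}) = f(b),
\end{equation*}
which are by definition the defining values of $[f(a), f(b)]$. Uniqueness forces $f \circ [a,b] = [f(a), f(b)]$.

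Finally I would verify the action axioms. The identity automorphism satisfies $\mathrm{id} \circ [a,b] = [a,b]$, and associativity of composition gives $(g \circ f) \circ [a,b] = g \circ (f \circ [a,b])$ for any $f, g \in \Aut(\dih)$, so the assignment $(f, [a,b]) \mapsto f \circ [a,b]$ is a left action of $\Aut(\dih)$. I do not anticipate any real obstacle; the only substantive point is checking that $[f(a), f(b)]$ is well-defined, which reduces to the straightforward modular computation above.
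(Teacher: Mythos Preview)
Your proof is correct and follows the same approach as the paper: the key step in both is the verification that $N(f(b)-f(a)) = N(y-x)(b-a) \equiv 0 \pmod{n}$, followed by the routine check of the action axioms via associativity of composition and the identity automorphism. Your version is simply more detailed, making explicit the appeal to the uniqueness clause of Proposition~\ref{univtb} to justify the identification $f\circ[a,b]=[f(a),f(b)]$, which the paper leaves implicit.
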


\begin{proof} For $[a,b]\in \Hom(Q\left(\TB\left(\frac{N}{M}\right)\right), \dih)$ and $f=\llbracket x,y\rrbracket \in \Aut(\dih)$, we see that $n\mid N(y-x)(b-a)=N(f(b)-f(a))$, i.e. $[f(a),f(b)]\in  \Hom(Q\left(\TB\left(\frac{N}{M}\right)\right), \dih)$. Since composition is associative and $1_{\dih}\in \Aut(\dih)$ fixes any $[a,b]$, we see that all the group action axioms are satisfied.\end{proof}
\begin{notn}Write $\psi\sim \phi$ if $\psi$ and $\phi$ lie in the same orbit under the action.\end{notn}
\begin{lem} \label{numedgeequal}For $\psi,\psi',\phi,\phi' \in \Hom(Q\left(\TB\left(\frac{N}{M}\right)\right),\dih)$ such that $\psi\sim \psi'$ and $\phi\sim \phi'$, we have $$|\{f\in \End(\dih): \phi=f\circ \psi \}|=|\{f\in \End(\dih): \phi'=f\circ \psi'\}|.$$  \end{lem}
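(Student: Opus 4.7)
The plan is to build an explicit bijection between the two edge-sets. By hypothesis, there exist automorphisms $g,h\in\Aut(\dih)$ with $\psi'=g\circ\psi$ and $\phi'=h\circ\phi$; since $g$ and $h$ are invertible, both $g^{-1}$ and $h^{-1}$ are again quandle automorphisms. My intention is to show that conjugation-style transport $f\mapsto h\circ f\circ g^{-1}$ sends the first set bijectively onto the second.

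First I would verify that this map lands in the correct set. If $f\in\End(\dih)$ satisfies $\phi=f\circ\psi$, then $h\circ f\circ g^{-1}$ is a composition of quandle homomorphisms $\dih\to\dih$, hence again an element of $\End(\dih)$. Computing
\[
(h\circ f\circ g^{-1})\circ \psi' = h\circ f\circ g^{-1}\circ g\circ \psi = h\circ f\circ \psi = h\circ \phi = \phi',
\]
so $h\circ f\circ g^{-1}$ belongs to $\{f'\in\End(\dih): \phi'=f'\circ\psi'\}$. Next I would observe that the map $f'\mapsto h^{-1}\circ f'\circ g$, constructed in the same way but using the inverse automorphisms, provides a two-sided inverse. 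Consequently the two sets are in bijection and have the same cardinality.

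There is no serious obstacle here: the argument uses only that the elements of $\Aut(\dih)$ are invertible as quandle morphisms, so pre- and post-composition transfers endomorphisms cleanly between the two fibers. The one point requiring a little care is remembering that the witnesses $g$ and $h$ may be different automorphisms and that they act on different sides; writing the transport as $h\circ f\circ g^{-1}$ (rather than, say, conjugation by a single automorphism) is what makes the computation go through.
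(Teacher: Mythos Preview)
Your argument is correct and matches the paper's proof essentially line for line: the paper also chooses $g,h\in\Aut(\dih)$ with $\psi'=g\circ\psi$, $\phi'=h\circ\phi$, and exhibits the mutually inverse maps $f\mapsto h\circ f\circ g^{-1}$ and $f\mapsto h^{-1}\circ f\circ g$. If anything, you have been slightly more explicit in checking that the transported map satisfies the required equation.
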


\begin{proof} Since $\psi\sim \psi'$ and $\phi\sim \phi'$, there exist $g,h\in \Aut(\dih)$ such that $\psi'=g\circ \psi$ and $\phi'=h\circ \phi$. Define two maps $T: \{f\in \End(\dih): \phi=f\circ \psi \} \to \{f\in \End(\dih): \phi'=f\circ \psi' \}$ by $f\mapsto h\circ f\circ g^{-1}$, and $S: \{f\in \End(\dih): \phi'=f\circ \psi' \} \to \{f\in \End(\dih): \phi=f\circ \psi \}$ by $f\mapsto h^{-1}\circ f\circ g$. We see that $T$ and $S$ are inverse to each other. Hence, two sets are of the same size. \end{proof}

By translation, any orbit contains an element of the form $[0,a]$.
Consequently, it suffices to consider edges between them, i.e. $$|\{f\in \End(\dih): [0,b]=f\circ [0,a]\}|=|\{f\in \End(\dih): f(0)=0, f(a)=b\}|.$$

\begin{lem} For $a,b\in \dih$, we have $$|\{f\in \End(\dih): f(0)=0, f(a)=b\}|=|\{x\in \mathbb{Z}_n: ax\equiv b \pmod{n}\}|.$$\end{lem}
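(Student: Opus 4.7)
The plan is to use the explicit parametrization of $\End(\dih)$ given by the $\llbracket x,y\rrbracket$ notation, and observe that imposing the condition $f(0)=0$ collapses that two-parameter family to a single parameter $y\in\mathbb{Z}_n$, after which the condition $f(a)=b$ translates directly to the congruence $ay\equiv b\pmod{n}$.

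More concretely, every $f\in\End(\dih)$ has the form $f=\llbracket x,y\rrbracket$ with $f(0)=x$, $f(1)=y$, and the paper records the closed formula $f(a)=(y-x)a+x\pmod{n}$. So first I would restrict attention to endomorphisms satisfying $f(0)=0$, which forces $x=0$ and leaves precisely the family $\{\llbracket 0,y\rrbracket : y\in\mathbb{Z}_n\}$; on this family the formula simplifies to $f(a)=ay\pmod{n}$. Then the condition $f(a)=b$ becomes $ay\equiv b\pmod{n}$, so sending $f=\llbracket 0,y\rrbracket$ to $y$ gives a well-defined map
\[
\{f\in\End(\dih): f(0)=0,\ f(a)=b\} \longrightarrow \{y\in\mathbb{Z}_n: ay\equiv b\pmod{n}\}.
\]

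Finally, I would verify this map is a bijection. Injectivity is immediate from uniqueness in the $\llbracket \cdot,\cdot\rrbracket$ notation, and surjectivity follows because for any $y\in\mathbb{Z}_n$ with $ay\equiv b\pmod{n}$, the endomorphism $\llbracket 0,y\rrbracket$ satisfies both required conditions. Equating cardinalities gives the claimed equality.

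I do not expect any serious obstacle here: the whole argument is essentially a translation between the two descriptions of $\End(\dih)$ that the paper has already set up, and the only thing to double-check is that $\llbracket 0,y\rrbracket$ is indeed a genuine quandle endomorphism for every $y$ (equivalently, that fixing $f(0)$ and $f(1)$ freely determines an element of $\End(\dih)$), which is exactly the content of the notation introduced just before the lemma.
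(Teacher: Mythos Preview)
Your proposal is correct and is precisely the paper's argument: the paper's one-line proof states that the maps $f\mapsto f(1)$ and $x\mapsto \llbracket 0,x\rrbracket$ are inverse to each other, which is exactly the bijection you set up and verify. You have simply written out the details that the paper leaves implicit.
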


\begin{proof} Two maps $f\mapsto f(1)$ and $x\mapsto \llbracket 0,x\rrbracket$ are inverses.
\end{proof}

It is a basic number theory result that $$\{x\in \mathbb{Z}_n: ax\equiv b \pmod{n}\}=\begin{cases} \gcd(a,n) &\text{ if }\gcd(a,n)\mid b,\\ 0 &\text{ else}.\end{cases}.$$ We immediately have our result.

\begin{prop} \label{countendo} For $a,b\in \dih$, we have $$|\{f\in \End(\dih): [0,b]=f\circ [0,a]\}|=\begin{cases} \gcd(a,n) &\text{ if }\gcd(a,n)\mid b,\\ 0 &\text{ else}.\end{cases}$$ \end{prop}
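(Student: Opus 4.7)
The plan is to observe that this proposition is essentially an immediate concatenation of the preceding lemma with a standard fact about linear congruences. First, the lemma just above tells us that
\[
|\{f\in \End(\dih): [0,b]=f\circ [0,a]\}| = |\{f \in \End(\dih) : f(0)=0,\ f(a)=b\}| = |\{x \in \mathbb{Z}_n : ax \equiv b \pmod n\}|,
\]
via the explicit bijection $f \mapsto f(1)$ with inverse $x \mapsto \llbracket 0,x\rrbracket$. So the only thing left is to count the solutions of the linear congruence $ax \equiv b \pmod n$ in $\mathbb{Z}_n$.

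Next I would invoke (or briefly justify) the elementary number theory fact: writing $d = \gcd(a,n)$, the congruence $ax \equiv b \pmod n$ has a solution if and only if $d \mid b$, in which case the solution set is a coset of $\{x \in \mathbb{Z}_n : ax \equiv 0 \pmod n\}$, a subgroup of order $d$. Hence the number of solutions in $\mathbb{Z}_n$ is $\gcd(a,n)$ when $\gcd(a,n) \mid b$, and $0$ otherwise. Combining with the identification above yields the claimed formula.

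The only step that requires a bit of care, and the closest thing to a real obstacle, is making sure the bijection $f \mapsto f(1)$ in the preceding lemma genuinely lands in endomorphisms of the dihedral quandle (not just set maps): one should note that $\llbracket 0,x\rrbracket(a) = ax \pmod n$ truly defines an element of $\End(\dih)$, which is already part of the earlier notation discussion. Once that is in hand, nothing remains besides quoting the linear congruence count, so the proof can be written in just a couple of lines.
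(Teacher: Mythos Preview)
Your proposal is correct and follows exactly the paper's approach: the paper also simply combines the preceding lemma (reducing to counting solutions of $ax\equiv b\pmod n$) with the basic number-theory fact about linear congruences, remarking that the result is then immediate. Your added justification of the solution count is a bit more detailed than the paper's one-line citation, but the argument is the same.
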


\subsection{The quiver when $n$ is a power of a prime}
Let us first consider the case when  $n=p^{\alpha}$, where $p$ a prime and $\alpha$ is a positive integer. 

\begin{defn}
    The \textit{$p$-adic valuation} of an integer $m$, denoted by $\nu_p(m)$, is the highest power of $p$ dividing $m$.
\end{defn}

Given $p, \alpha,$ and $N$, we set $\beta=\min\{\alpha,\nu_p(N)\}$. We now characterize orbits of 
$\Hom(Q\left(\TB\left(\frac{N}{M}\right)\right), \dihprime)$ and count endomorphisms between them.

\begin{lem} \label{orbitdecomp} 
% Let $n=p^\alpha$, where $p$ a prime and $\alpha\geq 1$ an integer. Let $N,M\in\mathbb{N}$ with $\gcd(N,M)=1$. Set $\beta=\min\{\alpha,\nu_p(N)\}$. 
Under the action of $\Aut(\dihprime)$ on $\Hom(Q\left(\TB\left(\frac{N}{M}\right)\right), \dihprime)$,  for $\alpha-\beta \leq j,j'\leq \alpha$ , we have
\begin{enumerate}
\item $[0,p^j]\in \Hom(Q\left(\TB\left(\frac{N}{M}\right)\right), \dihprime)$.
\item $[0,p^j]$ and $[0,p^{j'}]$ lie in same orbit if and only if $j=j'$.
\item The size of the orbit of $[0,p^j]$, denoted by $n_j$, is given by $$n_j=\begin{cases} p^{2\alpha-j-1}(p-1) &\text{ if } j<\alpha, \\ p^\alpha &\text{ if }j=\alpha. \end{cases}$$
\item $\Hom(Q\left(\TB\left(\frac{N}{M}\right)\right), \dihprime)$ is partitioned into orbits with $\{[0,p^j]: \alpha-\beta \leq j\leq \alpha\}$ being a complete set of representatives.
\item The number of endomorphisms of $\mathbb{Z}_{p^\alpha}^{dih}$  sending $[0, p^j]$ to $[0, p^{j'}]$, denoted by $n_{j,j'}$, is given by $$n_{j,j'}=\begin{cases} 0 & \text{ if } j>j',\\ p^j & \text{ if }j\leq j'.\end{cases}$$
\end{enumerate}
\end{lem}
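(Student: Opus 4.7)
The plan is to use the explicit parametrization of $\Aut(\dihprime)$ and $\End(\dihprime)$ coming from the notation $\llbracket x,y\rrbracket$, i.e.\ writing every endomorphism as $f(a)=(y-x)a+x$ with $f$ an automorphism exactly when $y-x\in \mathbb{Z}_{p^\alpha}^\times$. Under this identification, the action of Proposition \ref{autactonend} on $[a,b]$ is affine, which makes orbits and stabilizers transparent.

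For part (1), I would verify that $[0,p^j]$ lies in $\Hom$ by checking the congruence $N(p^j-0)\equiv 0\pmod{p^\alpha}$ noted after the notation $[a,b]$; this reduces to $\nu_p(N)+j\geq \alpha$, which holds because $\nu_p(N)\geq \beta$ and $j\geq \alpha-\beta$. For part (2), I would show $[0,p^j]\sim[0,p^{j'}]$ iff $j=j'$ by writing an automorphism as $f(a)=ua+x$ with $u\in \mathbb{Z}_{p^\alpha}^\times$; the condition $f(0)=0$ forces $x=0$, and $up^j\equiv p^{j'}\pmod{p^\alpha}$ with $u$ a unit forces the $p$-adic valuations (taken mod $p^\alpha$) to coincide, giving $j=j'$ when $j,j'<\alpha$, and the only remaining case $j=j'=\alpha$ is automatic.

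For part (3), I would describe the orbit of $[0,p^j]$ explicitly as $\{[x,up^j+x]:x\in\mathbb{Z}_{p^\alpha},\ u\in\mathbb{Z}_{p^\alpha}^\times\}$. When $j=\alpha$, $p^j\equiv 0$, so the orbit is the set of $p^\alpha$ trivial colorings $[x,x]$. When $j<\alpha$, I count for each fixed $x$ the number of distinct values $up^j\pmod{p^\alpha}$ as $u$ ranges over units: these are exactly the elements of $p$-adic valuation equal to $j$ in $\mathbb{Z}_{p^\alpha}$, namely $p^j m$ with $\gcd(m,p)=1$ and $0\leq m<p^{\alpha-j}$, giving $p^{\alpha-j-1}(p-1)$ choices; multiplying by $p^\alpha$ choices of $x$ yields the stated size $n_j=p^{2\alpha-j-1}(p-1)$. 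For part (4), given any $[a,b]\in\Hom$ the condition $N(b-a)\equiv 0\pmod{p^\alpha}$ forces $\nu_p(b-a)\geq \alpha-\nu_p(N)$ (interpreting $b=a$ as valuation $\alpha$); setting $j=\min(\nu_p(b-a),\alpha)$ places $j$ in $[\alpha-\beta,\alpha]$, and by construction $b-a=up^j$ for some unit $u$, so $[a,b]$ lies in the orbit of $[0,p^j]$. Disjointness of these orbits was established in (2), so (4) follows.

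For part (5), I would apply Proposition \ref{countendo} directly, reducing the count to the number of solutions of $p^j x\equiv p^{j'}\pmod{p^\alpha}$. By the elementary number-theoretic fact cited in the excerpt, this equals $\gcd(p^j,p^\alpha)=p^j$ when $\gcd(p^j,p^\alpha)=p^j$ divides $p^{j'}$ (i.e.\ $j\leq j'$, including $j'=\alpha$), and equals $0$ otherwise. I do not expect a serious obstacle: the only delicate step is keeping track of the boundary case $j=\alpha$ (where $p^j\equiv 0$ conflates the trivial coloring with the orbit labels), and making sure the argument for (3) treats $j<\alpha$ and $j=\alpha$ uniformly via the unit parametrization.
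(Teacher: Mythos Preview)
Your proposal is correct, and parts (1), (2), and (5) are essentially the same as the paper's (the paper invokes Proposition~\ref{countendo} for (2) as well, noting that if there are zero endomorphisms from $[0,p^j]$ to $[0,p^{j'}]$ there are certainly zero automorphisms, which is the same valuation observation you make). The only presentational wrinkle in your (2) is the mixed case $j<\alpha$, $j'=\alpha$, which you leave implicit; it is covered by your valuation remark, but you should say so.

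For parts (3) and (4) you take a genuinely different route from the paper. The paper computes the \emph{stabilizer} of $[0,p^j]$ (finding it has size $p^j$ for $j<\alpha$ and $\phi(p^\alpha)$ for $j=\alpha$) and then applies the orbit--stabilizer theorem to get $n_j$; for (4) it simply sums $\sum_j n_j$, obtains $p^{\alpha+\beta}$, and compares with Corollary~\ref{corcolornumber} to conclude that no orbits are missing. You instead describe each orbit explicitly as $\{[x,up^j+x]\}$ and count it directly, and for (4) you place an arbitrary $[a,b]$ into the orbit indexed by $j=\min(\nu_p(b-a),\alpha)$ by exhibiting the automorphism $t\mapsto ut+a$. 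Your approach is more constructive (it tells you exactly which orbit a given coloring lands in, which the paper's counting argument does not), while the paper's is a bit slicker and avoids the case analysis on whether $b-a$ is zero. Both are short and either would be acceptable; the explicit orbit description you give in (3) is arguably the more informative of the two.
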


\begin{proof} \begin{enumerate}
\item Since $j \ge  \alpha-\beta \ge \alpha-\nu_p(N)$, we have $p^\alpha \mid Np^j$.

\item The converse is obvious.  Without loss of generality, let us suppose that $j>j'$. 
We see that $\gcd(p^j,p^\alpha) = p^j \nmid p^{j'}$, so there is no automorphism from $[0,p^j]$ to $[0,p^{j'}]$ by Proposition~\ref{countendo}.
%Suppose for contradiction that $[0,p^j]$ and $[0,p^{j'}]$ lie in the same orbit, i.e. there is $\llbracket x,y\rrbracket \in \Aut(\dihprime)$ such that $[x,p^jy]=\llbracket x,y\rrbracket\tr [0,p^j]=[0,p^{j'}]$. Then, $x=0$ and $y\equiv p^{j'-j}\pmod {p^{\alpha-j}}$. This gives $\gcd(y-x,p^\alpha)\geq p>1$. So, $\llbracket x,y\rrbracket \notin \Aut(\dihprime)$, a contradiction. Hence, $[0,p^j]$ and $[0,p^{j'}]$ lie in different orbits.

\item We first determine the size of stabilizer of $[0,p^j]$, which is equal to the number of $ \llbracket x,y\rrbracket \in\Aut(\dihprime)$ such that $\llbracket x,y\rrbracket \tr [0,p^j]=[0,p^j]$. Note that the size of $\Aut(\dihprime)$ is equal to $|\mathbb{Z}_{p^\alpha}| \cdot |\mathbb{Z}_{p^\alpha}^\times| = p^\alpha \phi(p^\alpha) = p^{2\alpha-1}(p-1)$.
\begin{description}
\item [Case 1] $j=\alpha$. In this case, it is equivalent to count a number of $\llbracket x,y\rrbracket$ such that $x=0$ and $y\in\mathbb{Z}_{p^\alpha}^\times$, so the stabilizer of $[0,p^\alpha] = [0,0]$ is of the size $|\mathbb{Z}_{p^\alpha}^\times|=\phi(p^\alpha)$.  By orbit-stabilizer theorem, the size of the orbit of $[0,p^\alpha]$ is $\frac{p^\alpha \phi(p^\alpha)}{\phi(p^\alpha)}=p^{\alpha}$.

\item [Case 2] $j<\alpha$. In this case, we count a number of $\llbracket x,y\rrbracket$ such that $x=0$ and $y p^j = p^j \pmod{p^\alpha}$. The last condition is equivalent to $y = 1+ kp^{\alpha-j}$ for a nonnegative integer $ k<p^j$. Hence, the stabilizer of $[0,p^j]$ is of the size $p^j$. By orbit-stabilizer theorem, the size of the orbit of $[0,p^j]$ is $\frac{p^\alpha \phi(p^\alpha)}{p^j}=p^{2\alpha-j-1}(p-1)$.
\end{description}

\item Consider the total size of the orbit of  $[0,p^j]$ for all $\alpha-\beta \leq j\leq \alpha$
\begin{align*}
\sum\limits_{\alpha-\beta \leq j \leq \alpha} n_j &= p^{\alpha} + \sum_{\alpha-\beta \leq j < \alpha}p^{2\alpha-j-1}(p-1) \\
&= p^{\alpha} + p^{2\alpha-1}(p-1)\cdot \frac{1}{p^{\alpha-\beta}} \sum_{0\leq j< \beta} \frac{1}{p^{j}}\\
&=p^{\alpha} + p^{2\alpha-1}(p-1)\cdot \frac{1}{p^{\alpha-\beta}} \cdot\frac{1-\frac{1}{p^{\beta}}}{1-\frac{1}{p}}\\
&= p^{\alpha+\beta}.
\end{align*}
One the other hand, we have $|\Hom(Q\left(\TB\left(\frac{N}{M}\right)\right), \dihprime)|=p^\alpha\gcd(N,p^\alpha)=p^{\alpha+\beta}$ by corollary~\ref{corcolornumber}. Hence  $[0,p^j]$ for $\alpha-\beta \leq j\leq \alpha$ are complete representatives.

\item  This follows from Proposition \ref{countendo}.
\end{enumerate}\end{proof}

Combining all the results from Lemma \ref{numedgeequal} and Lemma \ref{orbitdecomp}, we are able to determine the full quandle coloring quiver of the two-bridge link $\TB\left(\frac{N}{M}\right)$ with respect to the quandle $\dihprime$.

\begin{thm} \label{mainresult} Let $p$ be a prime, $\alpha\geq 1$ be an integer, and $N,M\in\mathbb{N}$ with $\gcd(N,M)=1$. The full coloring quiver of the two-bridge link $\TB\left(\frac{N}{M}\right)$ with respect to the quandle $\dihprime$ is given by $$\QQ_{\dihprime}\left(\TB\left(\frac{N}{M}\right)\right)\cong G_\beta,$$ where $\beta={\min\{\nu_p(N),\alpha\}}$, $G_0:= (\overleftrightarrow{K_{p^\alpha}},\widehat{p^\alpha})$ and $G_{j}:=G_{j-1} \overleftarrow{\nabla}_{\widehat{p^{\alpha-j}}}(\overleftrightarrow{K_{p^{\alpha+j-1}(p-1)}},\widehat{p^{\alpha-j}})$ for $1\leq j$ (see Figure \ref{fig:quiver}).
\end{thm}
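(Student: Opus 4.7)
The plan is to assemble the full coloring quiver directly from the two main ingredients already proved: the orbit decomposition of $\Hom(Q(\TB(N/M)), \dihprime)$ under $\Aut(\dihprime)$ in Lemma \ref{orbitdecomp}, and the fact from Lemma \ref{numedgeequal} (together with Proposition \ref{countendo}) that edge multiplicities depend only on the orbits of source and target. Once these are in hand, the theorem reduces to matching the resulting structure with the inductive construction of $G_\beta$.

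First I would fix the orbit representatives $[0,p^j]$ for $\alpha-\beta \le j \le \alpha$ from Lemma \ref{orbitdecomp}(4) and record the orbit sizes $n_j$ from part (3). By Lemma \ref{numedgeequal}, the number of directed edges between a vertex in the orbit of $[0,p^j]$ and a vertex in the orbit of $[0,p^{j'}]$ depends only on $(j,j')$, and by Lemma \ref{orbitdecomp}(5) this count is $n_{j,j'} = p^j$ if $j \le j'$ and $0$ otherwise. So the quiver is completely determined by these two pieces of data.

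To match with the stated construction I reindex by $k = \alpha - j$, so $k \in \{0,1,\ldots,\beta\}$. The sub-quiver on the orbit $j = \alpha$ has $p^\alpha$ vertices and multiplicity $n_{\alpha,\alpha} = p^\alpha$ between every ordered pair (loops included), which is precisely $G_0 = (\overleftrightarrow{K_{p^\alpha}}, \widehat{p^\alpha})$. For each $k \ge 1$, the sub-quiver on the orbit $j = \alpha - k$ has $n_{\alpha-k} = p^{\alpha+k-1}(p-1)$ vertices and within-orbit multiplicity $n_{\alpha-k,\alpha-k} = p^{\alpha-k}$, exactly matching the block $H_k := (\overleftrightarrow{K_{p^{\alpha+k-1}(p-1)}}, \widehat{p^{\alpha-k}})$ appearing in the statement.

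I would then finish by induction on $j$: assuming that the sub-quiver on orbits $\alpha, \alpha-1, \ldots, \alpha-j+1$ equals $G_{j-1}$, the addition of orbit $\alpha - j$ contributes a block isomorphic to $H_j$, plus $p^{\alpha-j}$ edges from each vertex of $H_j$ to each vertex of $G_{j-1}$ (by $n_{\alpha-j, \alpha-k'} = p^{\alpha-j}$ for every $k' < j$) and no edges in the reverse direction (by $n_{\alpha-k', \alpha-j} = 0$). This is exactly the effect of the operation $\overleftarrow{\nabla}_{\widehat{p^{\alpha-j}}}$, so the new quiver is $G_j$. The only delicate point of the write-up is keeping the two indexings straight between Lemma \ref{orbitdecomp} and the statement; no further computation is needed beyond what is already established.
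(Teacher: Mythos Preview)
Your proposal is correct and follows exactly the approach the paper indicates: the paper simply says that combining Lemma~\ref{numedgeequal} and Lemma~\ref{orbitdecomp} determines the quiver, and then describes its structure informally. Your write-up supplies the careful reindexing $k=\alpha-j$ and the inductive matching with the recursive definition of $G_\beta$ that the paper leaves implicit, but the underlying argument is the same.
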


In short terms, the full coloring quiver $\mathcal{Q}_{\dihprime}(\TB\left(\frac{N}{M}\right))$ has its vertex set partitioned into orbits of $[0,p^j]$ for $\alpha-\beta \leq j\leq \alpha$, each of which induces a regular complete subgraph, and has $p^i$ directed edges from each vertex from the orbit of $[0,p^i]$ to each vertex from the orbit of $[0,p^j]$ whenever $i\leq j$. If the order of the dihedral quandle is fixed, then the number $\beta$ determines the number of components of the quiver.

\begin{figure}[ht]
    \centering
    \includegraphics[width=\textwidth]{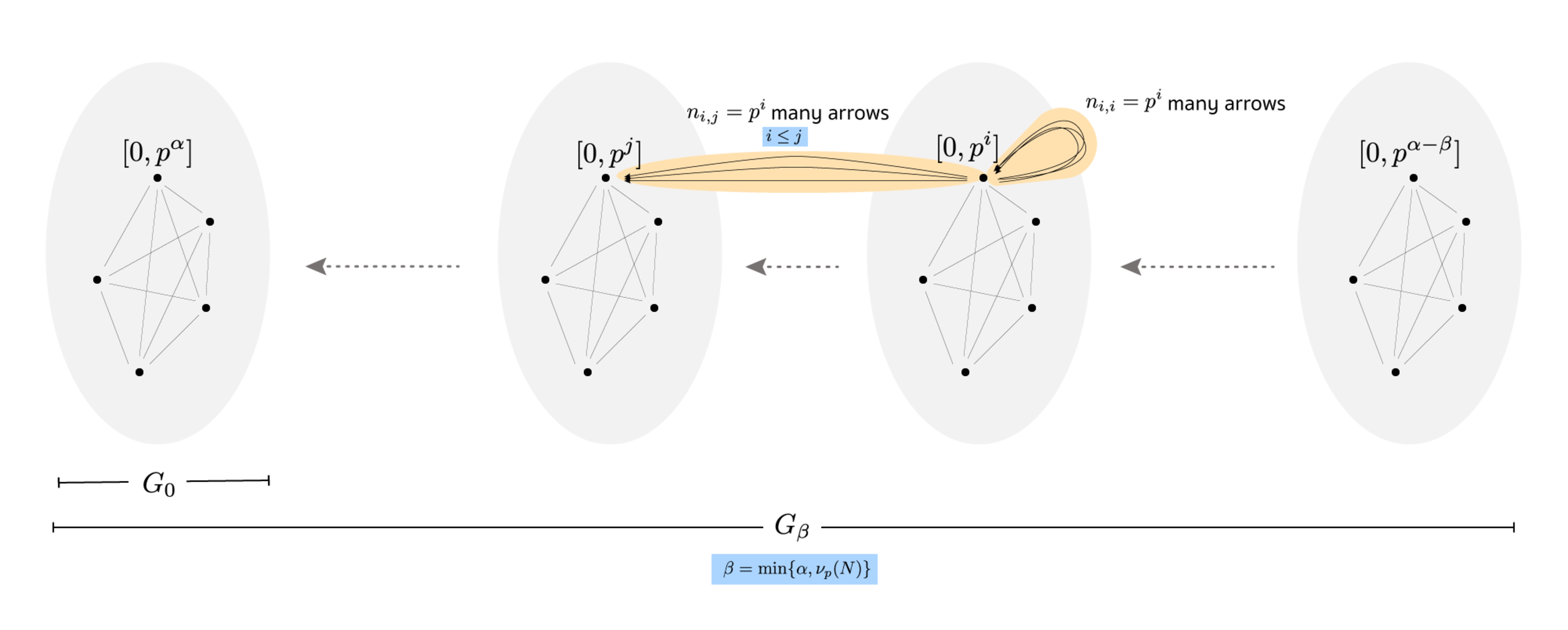}
    \caption{\label{fig:quiver} The full coloring quiver $\mathcal{Q}_{\dihprime}(\TB\left(\frac{N}{M}\right))$.}
\end{figure}

For instance, suppose that $L$ is the 4-crossing torus link and our quandle is $\mathbb{Z}_4^{dih}$. Then, $\{[0,0],[1,1],[2,2],[3,3]\}$ constitutes an orbit,\\ $\{[0,1],[1,2],[2,3],[3,0],[0,3],[1,0],[2,1],[3,2]\}$ constitutes an orbit, and \\ $\{[0,2],[1,3],[2,0],[3,1]\}$ constitutes an orbit.

\begin{cor} Let $p$ be a prime and $N,M\in\mathbb{N}$ with $\gcd(N,M)=1$. Then, the quiver $$\QQ_{\mathbb{Z}_{p}^{dih}}\left(\TB\left(\frac{N}{M}\right)\right)\cong \begin{cases}(\overleftrightarrow{K_{p}},\widehat{p})\overleftarrow{\nabla}_{\widehat{1}}(\overleftrightarrow{K_{p(p-1)}},\widehat{1}) & \text{ if }p\mid N,\\ (\overleftrightarrow{K_{p}},\widehat{p})&  \text{ if }p\nmid N.\end{cases}$$ \end{cor}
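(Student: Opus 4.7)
The plan is to derive this corollary as a direct specialization of Theorem \ref{mainresult} to the case $\alpha = 1$, since $\mathbb{Z}_p^{dih} = \dihprime$ when $\alpha=1$. First, I would compute $\beta = \min\{\nu_p(N), \alpha\} = \min\{\nu_p(N),1\}$. This is $0$ exactly when $p \nmid N$ and $1$ exactly when $p \mid N$, giving the natural split into the two cases in the statement.

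Next, I would unpack the graphs $G_\beta$ for these two values. When $\beta = 0$, the theorem gives $\QQ_{\mathbb{Z}_p^{dih}}(\TB(N/M)) \cong G_0 = (\overleftrightarrow{K_{p^\alpha}}, \widehat{p^\alpha}) = (\overleftrightarrow{K_p}, \widehat{p})$, matching the second case. When $\beta = 1$, the recursive definition yields
\[
G_1 = G_0 \overleftarrow{\nabla}_{\widehat{p^{\alpha-1}}} (\overleftrightarrow{K_{p^{\alpha+1-1}(p-1)}}, \widehat{p^{\alpha-1}}) = (\overleftrightarrow{K_p},\widehat{p}) \overleftarrow{\nabla}_{\widehat{1}} (\overleftrightarrow{K_{p(p-1)}}, \widehat{1}),
\]
which is precisely the first case.

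Since both cases emerge from substituting $\alpha=1$ into Theorem \ref{mainresult}, there is no substantive obstacle — the only thing to be careful about is the bookkeeping of the exponents in the definition of $G_j$, where one should verify that $p^{\alpha+j-1}(p-1)$ becomes $p(p-1)$ and that the multiplicities $\widehat{p^{\alpha-j}}$ collapse to $\widehat{1}$ when $\alpha = j = 1$.
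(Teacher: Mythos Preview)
Your proposal is correct and matches the paper's approach exactly: the paper's proof is simply ``Set $\alpha=1$ in theorem \ref{mainresult},'' and you have carried out precisely that substitution with the bookkeeping made explicit.
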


\begin{proof}Set $\alpha=1$ in theorem \ref{mainresult}. \end{proof}

\subsection{The general case}

For convenience, we start using multi-index notation. For a fix positive integer $n$, we write the prime decomposition $n=\prod_i p_i^{\alpha_i}$ as ${p}^{\alpha}$, where $p$ is regarded as the sequence of distinct prime factors and $\alpha$ is regarded as the sequence of corresponding exponents. For sequences of nonnegative integers $j=(j_i)$ and $j'=(j_i')$  with the same length as $p$, we write ${p^j}:= \prod_i p_i^{j_i}$, and define ${j}\preceq{j}'$ iff $j_i\leq j_i'$ for all $i$.

The next result generalizes Lemma \ref{orbitdecomp}. In a similar manner, we set the sequence $\beta$ with $\beta_i=\min\{\alpha_i,\nu_{p_i}(N)\}$.

\begin{lem} \label{orbitdecomp2} 
%Let $n\in\mathbb{N}$ such that $n>1$. Write $n=\prod_i p_i^{\alpha_i}$, where $p_i$ prime, and $\alpha_i \geq 0$. Let $N,M\in\mathbb{N}$ with $\gcd(N,M)=1$. Set $\beta_i=\min\{\alpha_i,\nu_{p_i}(N)\}$. 
Under the action of $\Aut(\dih)$ on $\Hom(Q\left(\TB\left(\frac{N}{M}\right)\right), \dih)$ , for $\alpha-\beta \preceq j,j'\preceq \alpha$ we have
\begin{enumerate}
\item $[0, p^j]\in \Hom(Q\left(\TB\left(\frac{N}{M}\right)\right), \dih)$.
\item $[0,p^{j}]$ and $[0,p^{j'}]$ lie in same orbit if and only if $j=j'$.
\item The size of the orbit of $[0,p^j]$ is given by $n_j :=\prod_i n_{j_i}$, where $$n_{j_i}=\begin{cases} p_i^{2\alpha_i-j_i-1}(p_i-1) &\text{ if } j_i<\alpha_i, \\ p_i^{\alpha_i} &\text{ if }j_i =\alpha_i. \end{cases}$$
\item $\Hom(Q\left(\TB\left(\frac{N}{M}\right)\right), \dih)$ is partitioned into orbits with $\{[0,p^j]: \alpha-\beta \preceq j\preceq \alpha\}$ being a complete set of representatives.
\item The number $n_{j,j'}$ of endomorphisms of $\dih$  sending $[0,p^j]$ to $[0,p^{j'}]$ is given by $$n_{j,j'}=\begin{cases} 0 & \text{ if } j\not\preceq j',\\ p^j & \text{ if }j\preceq j'.
\end{cases}$$
\end{enumerate}
\end{lem}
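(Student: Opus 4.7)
The plan is to reduce the lemma to the prime power case (Lemma \ref{orbitdecomp}) via the Chinese Remainder Theorem. Writing $n=\prod_i p_i^{\alpha_i}$, CRT gives a ring isomorphism $\mathbb{Z}_n\cong \prod_i \mathbb{Z}_{p_i^{\alpha_i}}$ which is simultaneously a quandle isomorphism $\dih \cong \prod_i \mathbb{Z}_{p_i^{\alpha_i}}^{dih}$ (since $x\tr y=2y-x$ is componentwise) and a monoid isomorphism $\End(\dih)\cong \prod_i \End(\mathbb{Z}_{p_i^{\alpha_i}}^{dih})$ sending $\llbracket x,y\rrbracket$ to $(\llbracket x_i,y_i\rrbracket)_i$. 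Consequently $\Hom(Q(\TB(\frac{N}{M})),\dih)$ factors as $\prod_i \Hom(Q(\TB(\frac{N}{M})),\mathbb{Z}_{p_i^{\alpha_i}}^{dih})$, the $\Aut$-action factors componentwise, and the class $[0,p^j]$ corresponds under CRT to the tuple $([0,p_i^{j_i}])_i$.

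With this identification in place, parts (1)--(4) follow componentwise from Lemma \ref{orbitdecomp}. For (1), the condition $n\mid Np^j$ splits into $p_i^{\alpha_i}\mid Np_i^{j_i}$ for all $i$, which holds because $j_i\geq \alpha_i-\beta_i=\max\{0,\alpha_i-\nu_{p_i}(N)\}$. For (2), an equation $yp^j\equiv p^{j'}\pmod n$ with $y\in \mathbb{Z}_n^\times$ likewise splits componentwise, and each component forces $j_i=j_i'$ by Lemma \ref{orbitdecomp}(2). For (3), the stabilizer of $[0,p^j]$ under $\Aut(\dih)$ is the product of the stabilizers of the $[0,p_i^{j_i}]$, so the orbit-stabilizer theorem gives $n_j=\prod_i n_{j_i}$ with each $n_{j_i}$ as in the prime power formula. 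For (4), summing these orbit sizes gives $\prod_i p_i^{\alpha_i+\beta_i}=n\gcd(N,n)$ by the computation in Lemma \ref{orbitdecomp}(4), and Corollary \ref{corcolornumber} identifies this sum with $|\Hom(Q(\TB(\frac{N}{M})),\dih)|$, so the listed elements exhaust all orbits.

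Part (5) does not need CRT; it is immediate from Proposition \ref{countendo} applied with $a=p^j$ and $b=p^{j'}$. Since $j\preceq \alpha$, we have $\gcd(p^j,n)=\prod_i p_i^{\min(j_i,\alpha_i)}=p^j$, and the divisibility $\gcd(p^j,n)\mid p^{j'}$ is exactly the condition $j\preceq j'$. Hence the count is $p^j$ when $j\preceq j'$ and $0$ otherwise.

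The only mildly delicate bookkeeping is verifying that the CRT decomposition is simultaneously compatible with the quandle product, with the $\Aut(\dih)$-action on $\Hom$, and with the $\llbracket\cdot,\cdot\rrbracket$ parametrization; once that is set up, every assertion reduces either to a componentwise application of Lemma \ref{orbitdecomp} or to a direct invocation of Proposition \ref{countendo}.
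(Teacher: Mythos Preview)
Your approach is correct and, in fact, slightly more streamlined than the paper's. The paper proves each item by a direct computation that parallels the prime-power case, invoking the Chinese Remainder Theorem explicitly only in part~(3) to count stabilizers; you instead set up the product decomposition $\dih\cong\prod_i\mathbb{Z}_{p_i^{\alpha_i}}^{dih}$ once at the outset and then cite Lemma~\ref{orbitdecomp} componentwise. The paper itself remarks after the proof that ``problems reduce to subproblems for each prime'' and that ``orbits and stabilizers split into `products','' which is exactly what your framing makes precise. Both routes rest on the same arithmetic (and both finish~(5) via Proposition~\ref{countendo}), so the difference is organizational rather than mathematical.

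One small slip worth fixing: the element $[0,p^j]$ does \emph{not} literally correspond under CRT to the tuple $([0,p_i^{j_i}])_i$. The $i$-th component of $p^j=\prod_k p_k^{j_k}$ modulo $p_i^{\alpha_i}$ is $p_i^{j_i}\cdot u_i$ with $u_i=\prod_{k\neq i}p_k^{j_k}$, and $u_i$ is a unit in $\mathbb{Z}_{p_i^{\alpha_i}}$, not $1$. This does not damage your argument---the unit places the $i$-th component in the same $\Aut(\mathbb{Z}_{p_i^{\alpha_i}}^{dih})$-orbit as $[0,p_i^{j_i}]$, and every statement you reduce to Lemma~\ref{orbitdecomp} is orbit-invariant---but you should say ``lies in the orbit of $([0,p_i^{j_i}])_i$'' rather than assert a literal equality.
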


\begin{proof} The proof also closely follows the proof of Lemma \ref{orbitdecomp}
\begin{enumerate}
\item For each $i$, we have $\alpha_i\leq \nu_{p_i}(N)+j_i$ since $\alpha_i-\nu_{p_i}(N)\leq \alpha_i-\beta_i \leq j_i$. Thus, $p^\alpha \mid Np^j$ and $[0,p^j]\in \Hom(Q\left(\TB\left(\frac{N}{M}\right)\right), \dih)$.

\item The converse is obvious. Without loss of generality, suppose that $j_i<j_i'$ for some index $i$. Suppose for contradiction that there is $\llbracket x,y\rrbracket \in \Aut(\dih)$ such that $[x,p^j(y-x)+x]=\llbracket x,y\rrbracket\tr [0,p^j]=[0,p^{j'}]$. We see that $x=0$ and $p^j y\equiv p^{j'}\pmod {p^{\alpha}}$. This implies $p_i \mid y$ and $\gcd(y,n)\geq p_i>1$,  which contradicts with $y \in \mathbb{Z}_n^\times$. Hence, $[0,p^j]$ and $[0,p^{j'}]$ lie in different orbits.

\item We also try to determine the size of the stabilizer of $[0,p^j]$, which is equal to the number of $ \llbracket x,y\rrbracket \in\Aut(\dih)$ such that $ \llbracket x,y\rrbracket \tr [0,p^j]=[0,p^j]$. We see that $x=0$ and $y\in\mathbb{Z}_n^\times$ satisfying $p^jy\equiv p^j \pmod {n}$. By looking at each prime, the condition is equivalent to the solving the system $p^{j_i} y\equiv p^{j_i} \pmod{p_i^{\alpha_i}}$ with $\gcd(y,p_i^{\alpha_i})=1 $ for each $i$.
%Let $m_j$ be the number of solutions for each $i$.
\begin{description}
\item [Case 1] $j_i=\alpha_i$.  The condition $p^{j_i} y\equiv p^{j_i} \pmod{p_i^{\alpha_i}}$ is trivial, so there are $\phi(p_i^{\alpha_i})$  solutions.
\item [Case 2] $j_i<\alpha_i$. In this case, there are $p_i^{j_i}$ solutions of the form $y = 1+ kp_i^{\alpha_i-j_i}\pmod{p_i^{\alpha_i}}$, where $0\leq k<p_i^{j_i}$. Note that the solutions satisfy $\gcd(y,p_i^{\alpha_i})=1$.
\end{description}

Let us define $$m_{j_i}=\begin{cases} p_i^{j_i} &\text{ if } j_i<\alpha_i, \\ \phi(p_i^{\alpha_i})&\text{ if }j_i =\alpha_i. \end{cases}$$
By Chinese remainder theorem, the size of the stabilizer of $[0,p^j]$ is $\prod_i m_{j_i}$. Thus, by orbit-stabilizer theorem, the size of the orbit of $[0,p^j]$ is $$\frac{n\phi(n)}{\prod_i m_{j_i}}=\prod_i \frac{p_i^{\alpha_i} \phi(p_i^{\alpha_i})}{m_{j_i}}=\prod_i \frac{p_i^{2\alpha_i-1}(p_i-1)}{m_{j_i}}=\prod_i n_{j_i}.$$ 

\item Consider the total of the size of orbits \begin{align*}
\sum_{\alpha-\beta \preceq j \preceq \alpha} n_j &= \sum_{\alpha_I-\beta_I \leq j_I \leq \alpha_I}\dots \sum_{\alpha_1-\beta_1 \leq j_1 \leq \alpha_1} \prod_i n_{j_i} \\
&= \prod_{i} \sum_{\alpha_i-\beta_i \leq j_i \leq \alpha_i} n_{j_i} \\
&= \prod_{i} \left[p_i^{\alpha_i} + \sum_{\alpha_i-\beta_i \leq j_i < \alpha_i}p_i^{2\alpha_i-j_i-1}(p_i-1) \right]\\
&= \prod_{i} \left[ p_i^{\alpha_i} + p_i^{2\alpha_i-1}(p_i-1)\cdot \frac{1}{p_i^{\alpha_i-\beta_i}} \sum_{0\leq j_i< \beta_i} \frac{1}{p_i^{j_i}}\right]\\
&=\prod_{i} \left[ p_i^{\alpha_i} + p_i^{2\alpha_i-1}(p_i-1)\cdot \frac{1}{p_i^{\alpha_i-\beta_i}} \cdot\frac{1-\frac{1}{p_i^{\beta_i}}}{1-\frac{1}{p_i}}\right]\\
&=\prod_{i}  p_i^{\alpha_i+\beta_i} = p^{\alpha+\beta}.
\end{align*} 
Since $|\Hom(Q\left(\TB\left(\frac{N}{M}\right)\right), \dih)|=n\gcd(N,n)=p^{\alpha+\beta}$, we have all elements from these orbits. 

\item  This also follows from Proposition \ref{countendo}.   
\end{enumerate}  \end{proof}

Combining all the results from Lemma \ref{numedgeequal} and Lemma \ref{orbitdecomp2}, we are able to determine the full quandle coloring quiver of the two-bridge link $\TB\left(\frac{N}{M}\right)$ with respect to the quandle $\dih$.

\begin{notn} Let $\Lambda$ be a set, $G=\{G_\lambda\}_{\lambda\in \Lambda}$ be a family of graphs indexed by $\Lambda$, and $w:\Lambda\times \Lambda \to \mathbb{N}_0$ be a map. Denote by $\overleftarrow\nabla_w G$ the disjoint union graph $\bigsqcup_{\lambda\in\Lambda}G_\lambda$ with additional $w(\lambda,\mu)$ directed edges from each vertex of $G_\lambda$ to each vertex of $G_\mu$. With this notion, $G_2\overleftarrow\nabla_{\hat{m}} G_1 = \overleftarrow\nabla_w \{G_1,G_2\},$ where $w:\{1,2\}\times \{1,2\}\to \mathbb{N}_0$ is given by $w(1,2)=m$ and $w(2,1)=w(1,1)=w(2,2)=0$.\end{notn}

\begin{thm} Let $n$ be a positive integer and write $n=\prod_i p_i^{\alpha_i}$, where $p_i$ are distinct primes and $\alpha_i > 0$. Let $N,M$ be positive integers with $\gcd(N,M)=1$ and set $\beta_i=\min\{\alpha_i,\nu_{p_i}(N)\}$. Let $\Lambda=\{j: \alpha-\beta\preceq j\preceq \alpha\}$. The full quandle coloring quiver of the two-bridge link $\TB\left(\frac{N}{M}\right)$ with respect to the quandle $\dih$ is given by $$\QQ_{\dih}\left(\TB\left(\frac{N}{M}\right)\right)\cong \overleftarrow\nabla_w \{(\overleftrightarrow{K_{n_j}},\widehat{p^j}):j\in\Lambda\},$$ where $w:\Lambda\times \Lambda \to \mathbb{N}_0$ is given by $$w(j,j')=\begin{cases} p^j & \text{ if }j\preceq j'\text{ and }j\neq j', \\ 0 &\text{ else}.\end{cases}$$ \end{thm}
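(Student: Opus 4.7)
The plan is to assemble the theorem directly from the orbit data in Lemma \ref{orbitdecomp2} and the edge-invariance result in Lemma \ref{numedgeequal}; essentially all the hard computation is already packaged in those statements. First I would identify the vertex set of the full quiver with a disjoint union indexed by $\Lambda$: parts (2) and (4) of Lemma \ref{orbitdecomp2} say that the action of $\Aut(\dih)$ on $\Hom(Q(\TB(N/M)),\dih)$ partitions it into orbits with $[0,p^j]$ a representative for each $j\in\Lambda$, and part (3) pins down the orbit size as $n_j$. This matches the vertex set of $\overleftarrow\nabla_w\{(\overleftrightarrow{K_{n_j}},\widehat{p^j}):j\in\Lambda\}$ on the nose.

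Next I would count edges by fixing an arbitrary pair of vertices $\psi,\phi$ with $\psi$ in the orbit of $[0,p^j]$ and $\phi$ in the orbit of $[0,p^{j'}]$, and applying Lemma \ref{numedgeequal} to transport the count to the pair of representatives $([0,p^j],[0,p^{j'}])$. By Lemma \ref{orbitdecomp2}(5), this count equals $n_{j,j'}=p^j$ when $j\preceq j'$ and $0$ otherwise. I then interpret these numbers as the edges of the target graph: when $j=j'$, the count $p^j$ between every ordered pair of vertices in the orbit (self-pairs included) is exactly the defining data of $(\overleftrightarrow{K_{n_j}},\widehat{p^j})$; when $j\neq j'$ with $j\preceq j'$, the count $p^j$ coincides with $w(j,j')$, accounting for the extra edges added by $\overleftarrow\nabla_w$; when $j\not\preceq j'$, the absence of edges matches $w(j,j')=0$.

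Putting the vertex bijection and the three edge-matchings together yields the claimed isomorphism of directed multigraphs. The main potential snag is purely notational: one must check that intra-orbit edges are not double-counted at the boundary $j=j'$. This is immediate because the definition of $w$ excludes $j=j'$, so $w(j,j)=0$ and the entire intra-orbit contribution comes from the summand $(\overleftrightarrow{K_{n_j}},\widehat{p^j})$, whose edge multiplicity $p^j$ is exactly what Lemma \ref{orbitdecomp2}(5) predicts when $j=j'$. Since the substantive orbit-counting and endomorphism-counting arguments were already dispatched in Lemmas \ref{numedgeequal} and \ref{orbitdecomp2}, the proof reduces to a straightforward reorganization of their conclusions into the $\overleftarrow\nabla_w$ formalism.
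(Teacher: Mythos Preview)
Your proposal is correct and follows exactly the approach the paper takes: the paper itself offers no separate proof of this theorem but simply states that it is obtained by ``combining all the results from Lemma \ref{numedgeequal} and Lemma \ref{orbitdecomp2},'' which is precisely the assembly you describe. If anything, your write-up is more explicit than the paper's, since you spell out how parts (2)--(5) of Lemma \ref{orbitdecomp2} match the vertex and edge data of the $\overleftarrow\nabla_w$ construction and you verify that the $j=j'$ case is handled by the $(\overleftrightarrow{K_{n_j}},\widehat{p^j})$ summand rather than by $w$.
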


The full quandle coloring quiver $\QQ_{\dih}\left(\TB\left(\frac{N}{M}\right)\right)$ is a higher dimensional generalization of that when $n$ is a prime power. Its vertex set is partitioned into orbits that can be arranged into a higher dimensional grid with width in the $i$-th dimension depending only on $\beta_i$.  We can see in the proof of Lemma \ref{orbitdecomp2} that problems reduce to subproblems for each prime dividing the order of the dihedral quandle. Roughly speaking, the orbits and stabilizers split into "products". See section 4 of \cite{taniguchi2021quandle} for more rigorous discussion of this situation.

\begin{exmp}
    The torus link $\TT(N,2)\cong \TB\left(\frac{N}{1}\right)$. The full quandle coloring quiver $\QQ_{\mathbb{Z}_{12}^{dih}}(\TT(36,2))$ is shown in Figure \ref{fig:quiver3}.
\end{exmp}

\begin{figure}[ht]
    \centering
    \includegraphics[width=0.8\textwidth]{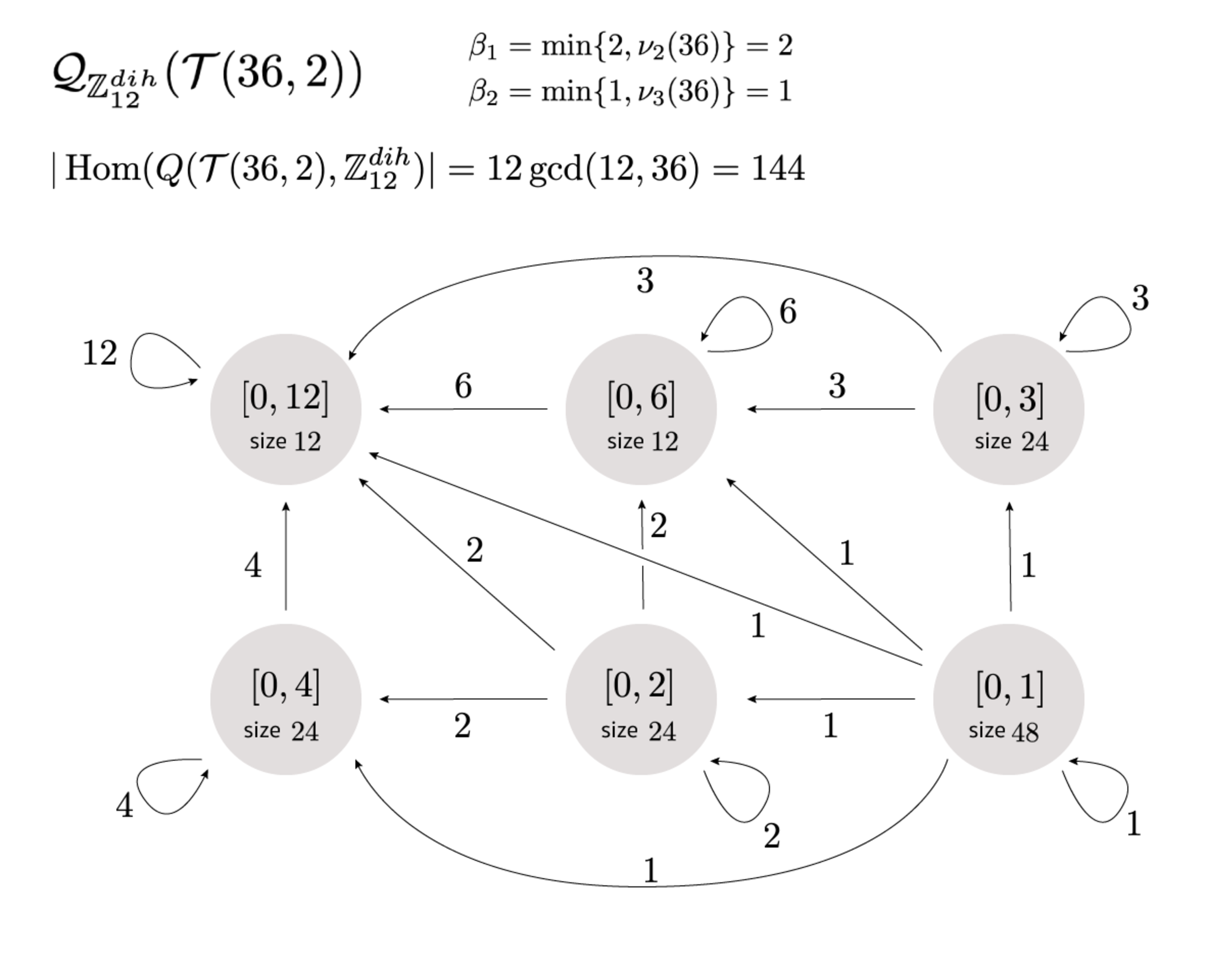}
    \caption{\label{fig:quiver3} The full quandle coloring quiver $\QQ_{\mathbb{Z}_{12}^{dih}}(\TT(36,2))$.}
\end{figure}

\subsection{Applications and remarks}
The formulas of quandle cocycle invariants of 2-bridge links are given in \cite{iwakiri2005calculation,carter2003quandle} for dihedral quandles of prime orders. This information can be combined with our results to calculate the quandle cocycle quivers of 2-bridge links \cite{nelson2019quandle}. Similarly, the authors of \cite{scott2005cocycle} computed quandle module invariants using some dihedral quandles, which can be used to compute the quandle module quivers \cite{istanbouli2020quandle} when combined with our result.

By a result of Taniguchi \cite{taniguchi2021quandle}, the quandle coloring quiver is not a stronger invariant if one uses the dihedral quandle of order $n=p_1p_2\cdots p_k$, where $p_i$ is a prime number. To find an instance of proper enhancement, we may have to consider a quandle whose order is a power of a prime.
\begin{exmp}

Consider the dihedral quandle $Q=\mathbb{Z}_4^{dih}$. Then, the quandle coloring number of $T(9,3)$ and $T(4,2)$ by $Q$ are both 16. By the main result of this paper and a result in \cite{zhou2023quandle}, the associated quiver invariants are not equal. In particular, the quiver for $T(4,2)$ contains three complete graphs $\overleftrightarrow{K_{4}}, \overleftrightarrow{K_{4}}$, and $\overleftrightarrow{K_{8}}$. On the other hand, the quiver for $T(9,3)$ contains four copies of complete graphs that are all $\overleftrightarrow{K_{4}}$ as shown schematically in Figure 6 of \cite{zhou2023quandle} (merging parallel edges). More examples can be obtained by replacing $9$ with $6k+3$ where $k=1,2,3,...$
    
\end{exmp}

Of course, other invariants already distinguish the links in the examples above, but our computations offer additional tools for potential use in the future to distinguish unknown knotted objects.

\subsection*{Acknowledgments}
 The research conducted for this paper is supported by the Pacific Institute for the Mathematical Sciences (PIMS). The first author is supported by the Centre of Excellence
in Mathematics, the Commission on Higher Education, Thailand. The research and findings may not reflect those of the Institute. The third author thanks Nicholas Cazet for helpful conversations and for introducing him to Fielder's work. We are grateful to Chris Soteros for support.
\bibliographystyle{plain}
\bibliography{refs}

\begin{thebibliography}{10}

\bibitem{basi2021quandle}
Jagdeep Basi and Carmen Caprau.
\newblock Quandle coloring quivers of (p, 2)-torus links.
\newblock {\em arXiv preprint arXiv:2112.05297}, 2021.

\bibitem{carter2003quandle}
J~Carter, Daniel Jelsovsky, Seiichi Kamada, Laurel Langford, and Masahico
  Saito.
\newblock Quandle cohomology and state-sum invariants of knotted curves and
  surfaces.
\newblock {\em Transactions of the American Mathematical Society},
  355(10):3947--3989, 2003.

\bibitem{cazet2023quandles}
Nicholas Cazet.
\newblock Quandles with one non-trivial column.
\newblock {\em arXiv preprint arXiv:2303.15815}, 2023.

\bibitem{nelson2019quandle}
Karina Cho and Sam Nelson.
\newblock Quandle cocycle quivers.
\newblock {\em Topology and its Applications}, 268:106908, 2019.

\bibitem{cho2019quandle}
Karina Cho and Sam Nelson.
\newblock Quandle coloring quivers.
\newblock {\em Journal of Knot Theory and Its Ramifications}, 28(01):1950001,
  2019.

\bibitem{conway1970enumeration}
John~H Conway.
\newblock An enumeration of knots and links, and some of their algebraic
  properties.
\newblock In {\em Computational problems in abstract algebra}, pages 329--358.
  Elsevier, 1970.

\bibitem{elhamdadi2015quandles}
Mohamed Elhamdadi and Sam Nelson.
\newblock {\em Quandles}, volume~74.
\newblock American Mathematical Soc., 2015.

\bibitem{istanbouli2020quandle}
Karma Istanbouli and Sam Nelson.
\newblock Quandle module quivers.
\newblock {\em Journal of Knot Theory and Its Ramifications}, 29(12):2050084,
  2020.

\bibitem{iwakiri2005calculation}
Masahide Iwakiri.
\newblock Calculation of dihedral quandle cocycle invariants of twist spun
  2-bridge knots.
\newblock {\em Journal of Knot Theory and Its Ramifications}, 14(02):217--229,
  2005.

\bibitem{joyce1982classifying}
David Joyce.
\newblock A classifying invariant of knots, the knot quandle.
\newblock {\em Journal of Pure and Applied Algebra}, 23(1):37--65, 1982.

\bibitem{kauffman2004classification}
Louis~H Kauffman and Sofia Lambropoulou.
\newblock On the classification of rational tangles.
\newblock {\em Advances in Applied Mathematics}, 33(2):199--237, 2004.

\bibitem{kauffman2009rational}
Louis~H. Kauffman and Pedro Lopes.
\newblock Determinants of rational knots.
\newblock {\em Discrete Math. Theor. Comput. Sci.}, 11(2):111--122, 2009.

\bibitem{matveev1984distributive}
Sergei~Vladimirovich Matveev.
\newblock Distributive groupoids in knot theory.
\newblock {\em Math. USSR Sbornik}, 47(1):73--83, 1984.

\bibitem{scott2005cocycle}
Carter~J Scott, Elhamdadi Mohamed, Grana Matias, and Saito Masahico.
\newblock Cocycle knot invariants from quandle modules and generalized quandle
  homology.
\newblock {\em Osaka Journal of Mathematics}, 42(3):499--541, 2005.

\bibitem{taniguchi2021quandle}
Yuta Taniguchi.
\newblock Quandle coloring quivers of links using dihedral quandles.
\newblock {\em Journal of Knot Theory and Its Ramifications}, 30(02):2150011,
  2021.

\bibitem{zhou2023quandle}
Boxin Zhou and Ximin Liu.
\newblock Quandle coloring quivers of (p, 3)-torus links.
\newblock {\em Journal of Knot Theory and Its Ramifications}, 32(03):2350016,
  2023.

\end{thebibliography}

\end{document}